\date{\today}
\theoremstyle{defin}
\newtheorem{defin}{{\bf Definition}}[section]
\newtheorem{nota}[defin]{{\bf Remark}}
\newtheorem{exemplo}[defin]{{\bf Example}}
\newtheorem{teo}[defin]{{\bf Theorem}}
\newtheorem{prop}[defin]{{\bf Proposition}}
\newtheorem{corol}[defin]{{\bf Corollary}}
\newcommand{\ot}{\otimes}
\newcommand{\co}{\circ}
\begin{document}

\begin{center}
 {\huge{\bf  Equivalences for weak crossed products}}

\end{center}

\ \\

{\bf  J.M. Fern\'andez Vilaboa$^{1}$, R. Gonz\'{a}lez
Rodr\'{\i}guez$^{2}$ and A.B. Rodr\'{\i}guez Raposo$^{3}$}

\ \\
\hspace{-0,5cm}$1$ Departamento de \'Alxebra, Universidad de
Santiago de Compostela,  E-15771 Santiago de Compostela, Spain
(e-mail: josemanuel.fernandez@usc.es)
\ \\
\hspace{-0,5cm}$2$ Departamento de Matem\'{a}tica Aplicada II,
Universidad de Vigo, Campus Universitario Lagoas-Mar\-co\-sen\-de,
E-36310 Vigo, Spain (e-mail: rgon@dma.uvigo.es)
\\
\hspace{-0,5cm}$3$ Departamento de Matem\'aticas, Universidade da Coru\~{n}a,
Escuela Polit\'ecnica Superior, E-15403 Ferrol, Spain
(e-mail: abraposo@edu.xunta.es)
\ \\

\begin{center}
{\bf Abstract}
\end{center}
{\small In this paper we give a criterion that characterizes equivalent weak crossed products.  By duality, we obtain
a similar result for weak crossed coproducts and, as a consequence, we find the conditions that assures the
equivalence between two weak crossed biproducts. As an application, we show that the main results proved by Panaite in \cite{Pan3} (see also \cite{Pan1}), for Brzezi\'nski's crossed products,  admits a substantial reduction in the imposed conditions.}

\vspace{0.5cm}

{\bf Keywords.} Monoidal category, weak crossed product, preunit, equivalent crossed products.

{\bf MSC 2010:} 18D10, 16T10, 16T15.

\section*{Introduction}

In the developement and further generalizations of the theory of Hopf algebras, crossed products play an important role. In \cite{mra-preunit} the so-called weak crossed product is defined, and it happen to contain as examples several types of crossed product constructions. For example, unified crossed products studied by Agore and Militaru in \cite{AM2}, partial crossed products studied by Muniz {\it et. al.} in \cite{partial} or Brzezi\'nski crossed products \cite{tb-crpr} are particular instances of weak crossed products. Moreover, weak crossed products provide also a general setting for studying crossed products in weak contexts as weak wreath products \cite{wreath} or weak crossed products for weak bialgebras \cite{ana1}. When we dualize the notion of weak crossed product we obtain a weak crossed coproduct, that, under certain conditions, can be glued to a weak crossed product so a weak crossed biproduct is obtained \cite{mra-proj}. Weak crossed biproducts are a generalization of cross products given by Bespalov and Drabant \cite{bes-drab1}. 

Equivalences of Brzezi\'nski crossed products and cross products were characterized by Panaite in \cite{Pan1, Pan3}. In these papers, Panaite gives a definition for the equivalence of two crossed products $A\otimes_{R, \sigma}V$ and $A\otimes_{R', \sigma'}V$, where $R,\sigma, R'$ and $\sigma'$ are morphisms used to define the multiplications on $A\otimes V$. The equivalence between these products can be given in terms of two morphisms $\theta:V\rightarrow A\otimes V$ and $\gamma:V\rightarrow A\otimes V$ such that an explicit relation between $R, \sigma$ and $R', \sigma'$ is obtained. This idea was used by Brzezi\'nski in \cite{tb-crpr} to characterize equivalences of crossed products by a coalgebra, and was also used in \cite{ana1} to obtain equivalences between two weak crossed products of weak bialgebras. However, this last case is not included in Panaite's theory for being a crossed product in a weak context. By a dualization of his results, Panaite obtains a characterization of the equivalence of crossed coproducts, and thus, he obtains results related to equivalences of cross product bialgebras. 

Let ${\mathbb A}_V=(A, V, \psi_V^A, \sigma_V^A)$ and ${\mathbb A}_W=(A, V, \psi_W^A, \sigma_W^A)$ be two four-tuples where $A$ is a monoid, and $\psi_V^A, \sigma_V^A$ and $\psi_W^A, \sigma_W^A$ are the morphisms we use to define associative multiplications on $A\otimes V$ and $A\otimes W$ respectively, so we have two weak crossed products. In the present paper we give some necessary and sufficient conditions for these products to be equivalent. We find that the two weak crossed products are equivalent if, and only if, morphisms $\psi_W^A$ and $\sigma_W^A$ can be obtained from $\psi_V^A$ and $\sigma_V^A$ using two suitable morphisms $\gamma:V\rightarrow A\otimes W$ and $\theta:W\rightarrow A\otimes V$. This result can be extended to the case of weak crossed products with preunit. When we particularize to the non-weak case and take $V=W$, we recover the results of equivalence studied by Panaite in \cite{Pan1, Pan3}. Moreover, our result supposes a substantial reduction in the imposed conditions in \cite{Pan1, Pan3}. Also, the theory we present here can be used to improve results on equivalences of crossed products of weak bialgebras studied in \cite{ana1} and \cite{nrm-hhapl}. When we dualize our result, we find a characterization for equivalences between two weak crossed coproducts and, if we glue together both cases, we obtain a characterization of the equivalence between two weak crossed biproducts. This characterization also extends the one by Panaite in \cite{Pan3}.

Throughout this paper $\mathcal C$ denotes a strict  monoidal category with tensor product $\ot$, unit object $K$. There is no loss of generality in assuming that ${\mathcal C}$ is strict because by Theorem XI.5.3  of \cite{Christian} (this result implies the Mac Lane's coherence theorem) we know that every monoidal category is monoidally equivalent to a strict one. Then, we may work as if the constrains were all identities. We also assume that in ${\mathcal C}$ every idempotent morphism splits, i.e., for any morphism $q:M\rightarrow M$ such that $q\co q=q$ there exists an object $N$, called the image of $q$, and morphisms $i:N\rightarrow M$, $p:M\rightarrow N$ such that $q=i\co p$ and $p\co i=id_N$. The morphisms $p$ and $i$ will be called a factorization of $q$. Note that $N$, $p$ and $i$ are unique up to isomorphism. The categories satisfying this property constitute a broad
class that includes, among others, the categories with epi-monic decomposition for
morphisms and categories with (co)equalizers.  Finally, given
objects $A$, $B$, $D$ and a morphism $f:B\rightarrow D$, we write
$A\ot f$ for $id_{A}\ot f$ and $f\ot A$ for $f\ot id_{A}$.

An monoid in ${\mathcal C}$ is a triple $A=(A, \eta_{A},
\mu_{A})$ where $A$ is an object in ${\mathcal C}$ and
 $\eta_{A}:K\rightarrow A$ (unit), $\mu_{A}:A\ot A
\rightarrow A$ (product) are morphisms in ${\mathcal C}$ such that
$\mu_{A}\co (A\ot \eta_{A})=id_{A}=\mu_{A}\co (\eta_{A}\ot A)$,
$\mu_{A}\co (A\ot \mu_{A})=\mu_{A}\co (\mu_{A}\ot A)$. Given two
monoids $A= (A, \eta_{A}, \mu_{A})$ and $B=(B, \eta_{B}, \mu_{B})$,
$f:A\rightarrow B$ is a monoid morphism if $\mu_{B}\co (f\ot
f)=f\co \mu_{A}$, $ f\co \eta_{A}= \eta_{B}$.

A comonoid in ${\mathcal C}$ is a triple ${D} = (D,
\varepsilon_{D}, \delta_{D})$ where $D$ is an object in ${\mathcal
C}$ and $\varepsilon_{D}: D\rightarrow K$ (counit),
$\delta_{D}:D\rightarrow D\ot D$ (coproduct) are morphisms in
${\mathcal C}$ such that $(\varepsilon_{D}\ot D)\co \delta_{D}=
id_{D}=(D\ot \varepsilon_{D})\co \delta_{D}$, $(\delta_{D}\ot D)\co
\delta_{D}=
 (D\ot \delta_{D})\co \delta_{D}.$ If ${D} = (D, \varepsilon_{D},
 \delta_{D})$ and
${ E} = (E, \varepsilon_{E}, \delta_{E})$ are comonoids,
$f:D\rightarrow E$ is a comonoid morphism if $(f\ot f)\co
\delta_{D} =\delta_{E}\co f$, $\varepsilon_{E}\co f
=\varepsilon_{D}.$

 Let  $A$ be a monoid. The pair
$(M,\varphi_{M})$ is a left $A$-module if $M$ is an object in
${\mathcal C}$ and $\varphi_{M}:A\ot M\rightarrow M$ is a morphism
in ${\mathcal C}$ satisfying $\varphi_{M}\circ( \eta_{A}\ot
M)=id_{M}$, $\varphi_{M}\circ (A\ot \varphi_{M})=\varphi_{M}\circ
(\mu_{A}\ot M)$. Given two left ${A}$-modules $(M,\varphi_{M})$ and
$(N,\varphi_{N})$, $f:M\rightarrow N$ is a morphism of left
${A}$-modules if $\varphi_{N}\circ (A\ot f)=f\circ \varphi_{M}$. In
a similar way we can define the notions of right $A$-module and
morphism of right $A$-modules. In this case we denote the left
action by $\phi_{M}$.

\section{Equivalent weak crossed products}

In the first paragraphs of this section we resume some basic facts
about the general theory of weak crossed products. The complete details can be found in \cite{mra-preunit}.

Let $A$ be a monoid and $V$ be an object in
${\mathcal C}$. Suppose that there exists a morphism
$$\psi_{V}^{A}:V\ot A\rightarrow A\ot V$$  such that the following
equality holds
\begin{equation}\label{wmeas-wcp}
(\mu_A\ot V)\co (A\ot \psi_{V}^{A})\co (\psi_{V}^{A}\ot A) =
\psi_{V}^{A}\co (V\ot \mu_A).
\end{equation}
 As a consequence of (\ref{wmeas-wcp}), the morphism $\nabla_{A\ot V}:A\ot V\rightarrow
A\ot V$ defined by
\begin{equation}\label{idem-wcp}
\nabla_{A\ot V} = (\mu_A\ot V)\co(A\ot \psi_{V}^{A})\co (A\ot V\ot
\eta_A)
\end{equation}
is  idempotent. Moreover, $\nabla_{A\ot V}$ satisfies that
$$\nabla_{A\ot V}\co (\mu_A\ot V) = (\mu_A\ot V)\co
(A\ot \nabla_{A\ot V}),$$ that is, $\nabla_{A\ot V}$ is a left
$A$-module morphism (see Lemma 3.1 of \cite{mra-preunit}) for the
left  action  $\varphi_{A\ot V}=\mu_{A}\ot V$. With $A\times V$,
$i_{A\ot V}:A\times V\rightarrow A\ot V$ and $p_{A\ot V}:A\ot
V\rightarrow A\times V$ we denote the object, the injection and the
projection associated to the factorization of $\nabla_{A\ot V}$.
Finally, if $\psi_{V}^{A}$ satisfies (\ref{wmeas-wcp}), the following
identities hold
\begin{equation}\label{fi-nab}
(\mu_{A}\ot V)\co (A\ot \psi_{V}^{A})\co (\nabla_{A\ot V}\ot A)=
(\mu_{A}\ot V)\co (A\ot \psi_{V}^{A})=\nabla_{A\ot V}\co(\mu_{A}\ot
V)\co (A\ot \psi_{V}^{A}).
\end{equation}

From now on we consider quadruples ${\Bbb A}_{V}=(A, V,
\psi_{V}^{A}, \sigma_{V}^{A})$ where $A$ is a monoid, $V$ an
object, $\psi_{V}^{A}:V\ot A\rightarrow A\ot V$ a morphism
satisfiying (\ref{wmeas-wcp}) and $\sigma_{V}^{A}:V\ot V\rightarrow
A\ot V$  a morphism in ${\mathcal C}$.

We say that ${\Bbb A}_{V}=(A, V, \psi_{V}^{A}, \sigma_{V}^{A})$
satisfies the twisted condition if
\begin{equation}\label{twis-wcp}
(\mu_A\ot V)\co (A\ot \psi_{V}^{A})\co (\sigma_{V}^{A}\ot A) =
(\mu_A\ot V)\co (A\ot \sigma_{V}^{A})\co (\psi_{V}^{A}\ot V)\co
(V\ot \psi_{V}^{A})
\end{equation}
and   the  cocycle
condition holds if
\begin{equation}\label{cocy2-wcp}
(\mu_A\ot V)\co (A\ot \sigma_{V}^{A}) \co (\sigma_{V}^{A}\ot V) =
(\mu_A\ot V)\co (A\ot \sigma_{V}^{A})\co (\psi_{V}^{A}\ot V)\co
(V\ot\sigma_{V}^{A}).
\end{equation}

Note that, if ${\Bbb A}_{V}=(A, V, \psi_{V}^{A}, \sigma_{V}^{A})$
satisfies the twisted condition in Proposition 3.4 of
\cite{mra-preunit} we prove that the following equalities hold:
\begin{equation}\label{c1}
(\mu_A\otimes V)\circ (A\otimes \sigma_{V}^{A})\circ
(\psi_{V}^{A}\otimes V)\circ (V\otimes \nabla_{A\otimes V}) =
\nabla_{A\otimes V}\circ (\mu_A\otimes V)\circ (A\otimes
\sigma_{V}^{A})\circ (\psi_{V}^{A}\otimes V),
\end{equation}
\begin{equation}\label{aw}
\nabla_{A\otimes V}\circ (\mu_A\otimes V)\circ
(A\otimes\sigma_{V}^{A})\circ (\nabla_{A\otimes V}\otimes V) =
\nabla_{A\otimes V}\circ (\mu_A\otimes V)\circ
(A\otimes\sigma_{V}^{A}).
\end{equation}

Then, if $\nabla_{A\ot V}\co\sigma_{V}^{A}=\sigma_{V}^{A}$ we obtain
\begin{equation}\label{c11}
(\mu_A\otimes V)\circ (A\otimes \sigma_{V}^{A})\circ
(\psi_{V}^{A}\otimes V)\circ (V\otimes \nabla_{A\otimes V}) =
 (\mu_A\otimes V)\circ (A\otimes
\sigma_{V}^{A})\circ (\psi_{V}^{A}\otimes V),
\end{equation}
\begin{equation}\label{aw1}
 (\mu_A\otimes V)\circ
(A\otimes\sigma_{V}^{A})\circ (\nabla_{A\otimes V}\otimes V) =
(\mu_A\otimes V)\circ (A\otimes\sigma_{V}^{A}).
\end{equation}

By virtue of (\ref{twis-wcp}) and (\ref{cocy2-wcp}) we will consider
from now on, and without loss of generality, that
\begin{equation}
\label{idemp-sigma-inv} \nabla_{A\ot V}\co\sigma_{V}^{A} =
\sigma_{V}^{A}
\end{equation}
holds for all quadruples ${\Bbb A}_{V}=(A, V, \psi_{V}^{A},
\sigma_{V}^{A})$ {(see Proposition 3.7 of \cite{mra-preunit})}.

For ${\Bbb A}_{V}=(A, V, \psi_{V}^{A}, \sigma_{V}^{A})$ define the
product
\begin{equation}\label{prod-todo-wcp}
\mu_{A\ot  V} = (\mu_A\ot V)\co (\mu_A\ot \sigma_{V}^{A})\co (A\ot
\psi_{V}^{A}\ot V)
\end{equation}
and let $\mu_{A\times V}$ be the product
\begin{equation}
\label{prod-wcp} \mu_{A\times V} = p_{A\ot V}\co\mu_{A\ot V}\co
(i_{A\ot V}\ot i_{A\ot V}).
\end{equation}

If the twisted and the cocycle conditions hold, the product
$\mu_{A\ot V}$ is associative and normalized with respect to
$\nabla_{A\ot V}$, i.e.,  
\begin{equation}
\label{normalized}
\nabla_{A\ot V}\co \mu_{A\ot V}=\mu_{A\ot
V}=\mu_{A\ot V}\co (\nabla_{A\ot V}\ot \nabla_{A\ot V}),
\end{equation}
and by the
definition of $\mu_{A\ot V}$ we have
\begin{equation}
\label{otra-prop} \mu_{A\ot V}\co (\nabla_{A\ot V}\ot A\ot
V)=\mu_{A\ot V}
\end{equation}
and therefore
\begin{equation}
\label{vieja-proof} \mu_{A\otimes V}\circ (A\otimes V\otimes
\nabla_{A\otimes V})=\mu_{A\otimes V}.
\end{equation}
 Due to the normality condition, $\mu_{A\times V}$ is
associative as well (Propostion 3.8 of \cite{mra-preunit}). Hence we
define:

\begin{defin}\label{wcp-def}{\rm
If ${\Bbb A}_{V}=(A, V, \psi_{V}^{A}, \sigma_{V}^{A})$  satisfies
(\ref{twis-wcp}) and (\ref{cocy2-wcp}) we say that $(A\ot V,
\mu_{A\ot V})$ is a weak crossed product.

Trivially, $\mu_{A\ot V}$ is left $A$-linear  for the left  actions $\varphi_{A\otimes
V}$, and $\varphi_{A\otimes V\otimes A\otimes V
}=\varphi_{A\otimes V}\otimes  A\otimes V$. Moreover, the restricted product $\mu_{A\times V}$ is left $A$-linear for $\varphi_{A\times V}=p_{A\ot V}\co \varphi_{A\otimes V}\co (A\ot i_{A\ot V})$ and $\varphi_{A\times V\ot A\times V}=
\varphi_{A\times V} \ot A\times V$.
}
\end{defin}

The next natural question that arises is if it is possible to endow
$A\times V$ with a unit, and hence with a monoid structure. As
$A\times V$ is given as an image of an idempotent, it seems
reasonable to use the notion of  preunit 
to obtain an unit. In our setting, if $A$ is a monoid, $V$ an
object in ${\mathcal C}$ and $m_{A\otimes V}$ is an associative
product defined in $A\otimes V$ a preunit $\nu_{V}:K\rightarrow A\otimes
V$ is a morphism satisfying
\begin{equation}
m_{A\otimes V}\circ (A\otimes V\otimes \nu_{V})=m_{A\otimes V}\circ
(\nu_{V}\otimes A\otimes V)=m_{A\otimes V}\circ (A\otimes V\otimes
(m_{A\otimes V}\circ (\nu_{V}\otimes \nu_{V}))).
\end{equation}
Associated to a preunit we obtain an idempotent morphism
$$\nabla_{A\otimes
V}^{\nu_{V}}=m_{A\otimes V}\circ (A\otimes V\otimes \nu_{V}):A\otimes
V\rightarrow A\otimes V.$$ Take $A\times V$ the image of this
idempotent, $p_{A\otimes V}^{\nu_{V}}$ the projection and $i_{A\otimes
V}^{\nu_{V}}$ the injection. It is possible to endow $A\times V$ with a
monoid structure whose product is $$m_{A\times V} = p_{A\otimes
V}^{\nu_{V}}\circ m_{A\otimes V}\circ (i_{A\otimes V}^{\nu_{V}}\otimes
i_{A\otimes V}^{\nu_{V}})$$ and whose unit is $\eta_{A\times
V}=p_{A\otimes V}^{\nu_{V}}\circ \nu_{V}$ (see Proposition 2.5 of
\cite{mra-preunit}). If moreover, $m_{A\otimes V}$ is left
$A$-linear for the actions $\varphi_{A\otimes V}$,
$\varphi_{A\otimes V\otimes A\otimes V }$ and normalized with respect to $\nabla_{A\otimes
V}^{\nu_{V}}$,  the morphism
\begin{equation}
\label{beta-nu} \beta_{\nu_{V}}:A\rightarrow A\otimes V,\; \beta_{\nu_{V}} =
(\mu_A\otimes V)\circ (A\otimes \nu_{V})
\end{equation}
is multiplicative and left $A$-linear for $\varphi_{A}=\mu_{A}$.

Although $\beta_{\nu_{V}}$ is not a monoid morphism, because $A\otimes
V$ is not a monoid, we have that $\beta_{\nu_{V}}\circ \eta_A = \nu_{V}$,
and thus the morphism $\bar{\beta_{\nu_{V}}} = p_{A\otimes
V}^{\nu_{V}}\circ\beta_{\nu_{V}}:A\rightarrow A\times V$ is a monoid
morphism.

In light of the considerations made in the last paragraphs, and
using the twisted and the cocycle conditions, in \cite{mra-preunit}
we characterize weak crossed products with a preunit, and moreover
we obtain a monoid structure on $A\times V$. These assertions are
a consequence of the following results proved in \cite{mra-preunit}.

\begin{teo}
\label{thm1-wcp} Let $A$ be a monoid, $V$ an object and
$m_{A\otimes V}:A\otimes V\otimes A\otimes V\rightarrow A\otimes V$
a morphism of left $A$-modules  for the left  action $\varphi_{A\otimes
V}$ and  $\varphi_{A\otimes V\otimes A\otimes V}$.

Then the following statements are equivalent:
\begin{itemize}
\item[(i)] The product $m_{A\otimes V}$ is associative with preunit
$\nu_{V}$ and normalized with respect to $\nabla_{A\otimes V}^{\nu_{V}}.$

\item[(ii)] There exist morphisms $\psi_{V}^{A}:V\otimes A\rightarrow A\otimes
V$, $\sigma_{V}^{A}:V\otimes V\rightarrow A\otimes V$ and
$\nu_{V}:k\rightarrow A\otimes V$ such that if $\mu_{A\otimes V}$ is the
product defined in (\ref{prod-todo-wcp}), the pair $(A\otimes V,
\mu_{A\otimes V})$ is a weak crossed product with $m_{A\otimes V} =
\mu_{A\otimes V}$ satisfying:
    \begin{equation}\label{pre1-wcp}
    (\mu_A\otimes V)\circ (A\otimes \sigma_{V}^{A})\circ
    (\psi_{V}^{A}\otimes V)\circ (V\otimes \nu_{V}) =
    \nabla_{A\otimes V}\circ
    (\eta_A\otimes V),
    \end{equation}
    \begin{equation}\label{pre2-wcp}
    (\mu_A\otimes V)\circ (A\otimes \sigma_{V}^{A})\circ
    (\nu_{V}\otimes V) = \nabla_{A\otimes V}\circ (\eta_A\otimes V),
    \end{equation}
    \begin{equation}\label{pre3-wcp}
(\mu_A\otimes V)\circ (A\otimes \psi_{V}^{A})\circ (\nu_{V}\otimes A) =
\beta_{\nu_{V}},
\end{equation}
\end{itemize}
where $\beta_{\nu_{V}}$ is the morphism defined in (\ref{beta-nu}). In
this case $\nu_{V}$ is a preunit for $\mu_{A\otimes V}$, the idempotent
morphism of the weak crossed product $\nabla_{A\otimes V}$ is the
idempotent $\nabla_{A\otimes V}^{\nu_{V}}$.
\end{teo}

\begin{nota}
\label{proof-resume} {\rm Note that in the proof of the previous
theorem, we obtain that  
\begin{equation}\label{fi-wcp}
\psi_{V}^{A} = \mu_{A\otimes V}\circ (\eta_A\otimes V\otimes
\beta_{\nu_{V}}),
\end{equation}
\begin{equation}\label{sigma-wcp}
\sigma_{V}^{A} = \mu_{A\otimes V}\circ (\eta_A\otimes V\otimes
\eta_A\otimes V), 
\end{equation}
hold. Also, by (\ref{pre3-wcp}), we have 
\begin{equation}\label{preunit-idemp}
\nabla_{A\ot V}\co \nu_{V}=\nu_{V}.
\end{equation}
}
\end{nota}

\begin{defin}\label{wcp-pre-def}
{\rm We will say that a weak crossed product $(A\otimes V, \mu_{A\otimes V})$ is a weak crossed product with preunit 
$\nu_{V}:k\rightarrow A\otimes V$, if  (\ref{pre1-wcp}), (\ref{pre2-wcp}) and (\ref{pre3-wcp}) hold.}
\end{defin}

Then, as a corollary of Proposition 2.5 of \cite{mra-preunit} and Theorem \ref{thm1-wcp}, we have the following corollary.

\begin{corol}\label{corol-wcp}
If $(A\otimes V, \mu_{A\otimes V})$ is a weak crossed product with
preunit $\nu_{V}$, then $A\times V$ is a monoid with the product
defined in (\ref{prod-wcp}) and unit $\eta_{A\times V}=p_{A\otimes
V}\circ\nu_{V}$.
\end{corol}

In the following definition we introduce the notion of equivalent weak crossed products. 

\begin{defin}\label{wcp-equiv-def}
{\rm Let  $(A\otimes V, \mu_{A\otimes V})$ and $(A\otimes W, \mu_{A\otimes W})$ be weak crossed products  with preunits  $\nu_{V}$ and $\nu_{W}$ respectively.  We will say that $(A\otimes V, \mu_{A\otimes V})$ and $(A\otimes W, \mu_{A\otimes W})$ are equivalent if there exists a monoid isomorphism $\alpha:A\times V\rightarrow A\times W$ of left $A$-modules for the actions $\varphi_{A\times V}$ and $\varphi_{A\times W}$.}
\end{defin}

The  notion of equivalence for weak crossed products is characterized by the main theorem of this section:

\begin{teo}
\label{thm2-equiv}
Let $(A\otimes V, \mu_{A\otimes V})$ and $(A\otimes W, \mu_{A\otimes W})$ be weak crossed products  with preunits  $\nu_{V}$ and $\nu_{W}$ respectively.  The following assertions are equivalent:

\item[(i)] The weak crossed products  $(A\otimes V, \mu_{A\otimes V})$ and $(A\otimes W, \mu_{A\otimes W})$ are equivalent.

\item[(ii)] There exist two morphisms 
$$T:A\ot V\rightarrow A\ot W,\;\; S:A\ot W\rightarrow A\ot V$$
of left $A$-modules for the actions $\varphi_{A\otimes V}$, $\varphi_{A\otimes W}$ satisfying the conditions 
\begin{equation}
\label{preserv-preunit}
T\co \nu_{V}=\nu_{W},
\end{equation}
\begin{equation}
\label{preserv-product}
T\co \mu_{A\otimes V}=\mu_{A\otimes W}\co (T\ot T),
\end{equation}
\begin{equation}
\label{preserv-idemp}
S\co T=\nabla_{A\ot V},\;\; T\co S=\nabla_{A\ot W},
\end{equation}

\item[(iii)] There exist two morphisms 
$$\gamma:V\rightarrow A\ot W,\;\; \theta:W\rightarrow A\ot V$$
satisfying the conditions 
\begin{equation}
\label{gamma-theta-preunit}
\nu_{V}=(\mu_{A}\ot V)\co (A\ot \theta)\co \nu_{W},
\end{equation}
\begin{equation}
\label{gamma-theta-idemp}
\theta=\nabla_{A\ot V}\co \theta,
\end{equation}
\begin{equation}
\label{gamma-theta-psi}
\psi_{W}^{A}=(\mu_{A}\ot W)\co (\mu_{A}\ot \gamma)\co (A\ot \psi_{V}^{A})\co (\theta\ot A),
\end{equation}
\begin{equation}
\label{gamma-theta-sigma}
\sigma_{W}^{A}=(\mu_{A}\ot W)\co (A\ot \gamma)\co \mu_{A\ot V}\co (\theta\ot \theta) ,
\end{equation}
\begin{equation}
\label{gamma-theta-special}
(\mu_{A}\ot V)\co (A\ot \theta)\co \gamma=\nabla_{A\ot V}\co (\eta_{A}\ot V).
\end{equation}
\end{teo}

\begin{proof} We begin by proving (i)$\Rightarrow $ (ii). Let $\alpha:A\times V\rightarrow A\times W$ be the monoid isomorphism of left $A$-modules for the actions $\varphi_{A\times V}$ and $\varphi_{A\times W}$. Define 
$$T=i_{A\ot W}\co \alpha \co p_{A\ot V}, \;\; S=i_{A\ot V}\co \alpha^{-1} \co p_{A\ot W}.$$
Then, (\ref{preserv-idemp}) and 
 \begin{equation}
\label{preserv-comp}
T\co S\co T=T,\;\; S\co T\co S=S.
\end{equation}
hold trivially. Also, using that $\alpha$ and $\alpha^{-1}$ are  monoid morphisms and 
(\ref{preunit-idemp}) for $\nu_{V}$ and $\nu_{W}$, we obtain (\ref{preserv-preunit}). On the other hand, $T$ is a morphism of left $A$-modules because, using that $\nabla_{A\ot V}$ is a morphism of left $A$-modules, we have,
$$p_{A\ot W}\co \varphi_{A\otimes W} \co (A\ot T)=\varphi_{A\times W}\co (A\ot \alpha)\co (A\ot p_{A\ot V})=
\alpha\co \varphi_{A\times V}\co (A\ot p_{A\ot V})$$
$$=\alpha\co p_{A\ot V}\co (\mu_{A}\ot V)\co (A\ot \nabla_{A\ot V})=\alpha\co p_{A\ot V}\co (\mu_{A}\ot V)=\alpha\co p_{A\ot V}\co \varphi_{A\otimes V}$$
and, as a consequence, composing with $i_{A\ot W}$, applying that  $\nabla_{A\ot W}$ is a morphism of left $A$-modules,
and by (\ref{preserv-comp}), 
$$T\co \varphi_{A\otimes V}= \varphi_{A\otimes W} \co (A\ot T)=\varphi_{A\otimes W} \co (A\ot (\nabla_{A\ot W}\co T))
=\varphi_{A\otimes W} \co (A\ot (T\co S\co T))=\varphi_{A\otimes W} \co (A\ot  T).$$
Similarly, we can prove that $S$ is a morphism of left $A$-modules.  Finally, we will obtain (\ref{preserv-product}), i.e, $T$ is multiplicative. Indeed: composing with $p_{A\ot W}$ we have 
\begin{itemize}

\item[ ]$\hspace{0.38cm}p_{A\ot W}\co  \mu_{A\ot W}\co (T\ot T) $

\item[ ]$= \mu_{A\times W}\co ((\alpha\co p_{A\ot V})\ot (\alpha\co p_{A\ot V}))$

\item[ ]$=\alpha\co  \mu_{A\times V}\co (p_{A\ot V}\ot p_{A\ot V})$

\item[ ]$= \alpha\co p_{A\ot V}\co  \mu_{A\otimes V}\co (\nabla_{A\ot V}\ot \nabla_{A\ot V}) $

\item[ ]$=p_{A\ot W} \co T\co \mu_{A\otimes V},  $

\end{itemize}

where the first equality follows by the definition of $\mu_{A\times W}$, in the second one we used that $\alpha$ is a monoid morphism, the third one relies on the definition of $\mu_{A\times V}$, and the last one follows by (\ref{normalized}) for $\mu_{A\ot V}$, and the properties of the projection and the injection associated to $\nabla_{A\ot W}$. 

Therefore, by (\ref{preserv-comp}) and (\ref{normalized}) for $\mu_{A\ot W}$, we have
$$T\co \mu_{A\otimes V}=T\co S\co T\co \mu_{A\otimes V}=\nabla_{A\ot W} \co T\co \mu_{A\otimes V}=\nabla_{A\ot W}\co  \mu_{A\ot W}\co (T\ot T)
=\mu_{A\ot W}\co (T\ot T).$$

The proof for (ii)$\Rightarrow $ (i) is the following. First note that, by (\ref{preserv-idemp}), $p_{A\ot V}=p_{A\ot V}\co \nabla_{A\ot V}=p_{A\ot V}\co S\co T$ and then 
\begin{equation}
\label{eq1}
p_{A\ot V}\co S=p_{A\ot V}\co S\co T\co S=p_{A\ot V}\co S\co \nabla_{A\ot W}.
\end{equation}
Similarly, 
\begin{equation}
\label{eq2}
p_{A\ot W}\co T=p_{A\ot W}\co T\co S\co T=p_{A\ot V}\co T\co \nabla_{A\ot V},
\end{equation}
\begin{equation}
\label{eq3}
S\co i_{A\ot W}= S\co T\co S\co i_{A\ot W} =\nabla_{A\ot V}\co S\co i_{A\ot W},
\end{equation}
and
\begin{equation}
\label{eq4}
T\co i_{A\ot V}= T\co S\co T\co i_{A\ot V} =\nabla_{A\ot W}\co T\co i_{A\ot V},.
\end{equation}
hold.

Set 
$$\alpha=p_{A\ot W}\co T \co i_{A\ot V}.$$
The, $\alpha$ is an isomorphism with inverse $\alpha^{\prime}=p_{A\ot V}\co S \co i_{A\ot W}.$ Indeed:  by (\ref{eq1}) and (\ref{preserv-idemp}),  we have 
$$\alpha^{\prime}\co \alpha=p_{A\ot V}\co S\co \nabla_{A\ot W}\co T \co i_{A\ot V}=p_{A\ot V}\co S\co  T \co i_{A\ot V}=p_{A\ot V}\co \nabla_{A\ot V}\co i_{A\ot V}=id_{A\times V},$$ 
and, similarly, $\alpha\co \alpha^{\prime}=id_{A\times W}$. Also,  by (\ref{eq2}) and (\ref{preserv-preunit})
$$\alpha \co \eta_{A\times V}=p_{A\ot W} \co  T\co \nabla_{A\ot V}\co \nu_{V}=p_{A\ot W} \co  T\co  \nu_{V} =p_{A\ot W} \co \nu_{W} = \eta_{A\times W}.$$
On the other hand, $\alpha$ is multiplicative because, by (\ref{normalized}) for $\mu_{A\times V}$ and $\mu_{A\ot W}$, and (\ref{preserv-product}) we have 

\begin{itemize}

\item[ ]$\hspace{0.38cm} \mu_{A\times W}\co (\alpha\ot \alpha) $

\item[ ]$= p_{A\ot W}\co \mu_{A\otimes W}\co ((\nabla_{A\ot W}\co T\co i_{A\ot V})\ot (\nabla_{A\ot W}\co T\co i_{A\ot V}))$

\item[ ]$=p_{A\ot W}\co \mu_{A\otimes W}\co ((T\co i_{A\ot V})\ot (T\co i_{A\ot V}))$

\item[ ]$= p_{A\ot W}\co T\co \mu_{A\otimes V}\co ( i_{A\ot V}\ot  i_{A\ot V}) $

\item[ ]$=p_{A\ot W}\co T\co \nabla_{A\ot V}\co\mu_{A\otimes V}\co ( i_{A\ot V}\ot  i_{A\ot V})   $

\item[ ]$=\alpha\co \mu_{A\times V}$, 

\end{itemize}

and finally, using that $\nabla_{A\ot W}$ and $T$ are morphism of left $A$-modules, as well as  (\ref{eq4}) and (\ref{eq2}),  we obtain 
$$\varphi_{A\times W}\co (A\ot \alpha)=p_{A\ot W}\co 
(\mu_{A}\ot W)\co (A\ot (\nabla_{A\ot W}\co T\co i_{A\ot V}))=p_{A\ot W}\co  \varphi_{A\ot W}\co (A\ot ( T\co i_{A\ot V}))$$
$$=p_{A\ot W}\co T\co \varphi_{A\ot V}\co (A\ot i_{A\ot V})=p_{A\ot W}\co T\co \nabla_{A\ot V}\co \varphi_{A\ot V}\co (A\ot i_{A\ot V})=\alpha\co \varphi_{A\times V}, $$
and $\alpha$ is a morphism of left $A$-modules.

Note that, if (\ref{preserv-product}) and (\ref{preserv-idemp}) hold,  we also have that $S$ is multiplicative, i.e., 
\begin{equation}
\label{preserv-product-s}
S\co \mu_{A\otimes W}=\mu_{A\otimes V}\co (S\ot S),
\end{equation}
Indeed: by (\ref{normalized}) for $\mu_{A\ot V}$ and $\mu_{A\ot W}$, 
$$\mu_{A\ot V}\co (S\ot S)=\nabla_{A\ot V}\co \mu_{A\ot V}\co (S\ot S)=S\co T\co \mu_{A\ot V}\co (S\ot S)
=S\co \mu_{A\ot W}\co ((T\co S)\ot (T\co S)) $$
$$=S\co \mu_{A\ot W}\co   (\nabla_{A\ot W}\ot \nabla_{A\ot W})=
S\co \mu_{A\ot W}.$$
Moreover, if (\ref{preserv-idemp}) holds $T$ is multiplicative if, and only if, $S$ is multiplicative.

In the next step of the proof we will prove that (ii) $\Rightarrow $ (iii).  First note that if $T$ and $S$ satisfy 
(\ref{preserv-preunit}) and (\ref{preserv-idemp}), the following identity holds 
\begin{equation}
\label{preserv-preunit-b}
S\co \nu_{W}=\nu_{V},
\end{equation}
because $\nu_{V}=\nabla_{A\ot V}\co \nu_{V}=S\co T\co \nu_{V}=S\co \nu_{W}.$

Consider 
$$\gamma=T\co (\eta_{A}\ot V),\;\; \theta=\nabla_{A\ot V}\co S\co (\eta_{A}\ot W).$$
Then, using that $S$ is a morphism of left $A$-modules, (\ref{preserv-preunit-b}), and (\ref{preunit-idemp}), we prove (\ref{gamma-theta-preunit}):
$$(\mu_{A}\ot V)\co (A\ot \theta)\co \nu_{W}=(\mu_{A}\ot V)\co (A\ot (\nabla_{A\ot V}\co S\co (\eta_{A}\ot W)))\co \nu_{W}
=\nabla_{A\ot V}\co S\co \nu_{W}=\nabla_{A\ot V}\co\nu_{V}=\nu_{V}.$$
Moreover, (\ref{gamma-theta-idemp})  follows trivially because $\nabla_{A\ot V}$ is idempotent. 

The proof for (\ref{gamma-theta-psi}) is the following:
\begin{itemize}

\item[ ]$\hspace{0.38cm} (\mu_{A}\ot W)\co (\mu_{A}\ot \gamma)\co (A\ot \psi_{V}^{A})\co (\theta\ot A) $

\item[ ]$=(\mu_{A}\ot W)\co (A\ot (T\co \psi_{V}^{A}))\co (\theta\ot A) $

\item[ ]$=(\mu_{A}\ot W)\co (A\ot (T\co (\mu_{A\otimes V}\circ (\eta_A\otimes V\otimes
\beta_{\nu_{V}}))))\co (\theta\ot A)$

\item[ ]$=(\mu_{A}\ot W)\co (A\ot (\mu_{A\otimes W}\circ ((T\co (\eta_A\otimes V))\otimes
(T\co\beta_{\nu_{V}}))))\co (\theta\ot A)  $

\item[ ]$= (\mu_{A}\ot W)\co (A\ot (\mu_{A\otimes W}\circ ((T\co (\eta_A\otimes V))\otimes
((\mu_{A}\ot V)\co (A\ot (T\co \nu_{V}))))))\co (\theta\ot A) $

\item[ ]$= \mu_{A\otimes W}\circ  (((\mu_{A}\ot W)\co (A\ot (T\co (\eta_{A}\ot V))))\ot \beta_{\nu_{W}}) \co (\theta\ot A)$

\item[ ]$= \mu_{A\otimes W}\circ ((T\co \nabla_{A\ot V}\co  S\co (\eta_{A}\ot W))\ot \beta_{\nu_{W}}) $

\item[ ]$= \mu_{A\otimes W}\circ ((T\co S\co T\co  S\co (\eta_{A}\ot W))\ot \beta_{\nu_{W}})$

\item[ ]$= \mu_{A\otimes W}\circ ((\nabla_{A\ot W}\co \nabla_{A\ot W}\co (\eta_{A}\ot W))\ot \beta_{\nu_{W}})$

\item[ ]$= \mu_{A\otimes W}\circ ((\nabla_{A\ot W}\co (\eta_{A}\ot W))\ot \beta_{\nu_{W}})$

\item[ ]$=\mu_{A\otimes W}\circ (\eta_A\otimes W\otimes
\beta_{\nu_{W}}) $

\item[ ]$=\psi_{W}^{A},$

\end{itemize}

where the first, the fourth and the sixth equalities follow by the left linearity of $T$, the second one follows by (\ref{fi-wcp}) for $\psi_{V}^{A}$, and the third one follows because $T$ is multiplicative. In the fifth equality we used that $\mu_{A\otimes W}$ is left linear and (\ref{preserv-preunit}). The seventh and eighth ones are a consequence of   (\ref{preserv-idemp}), the ninth one follows by the idempotent character of $\nabla_{A\ot W}$ and in the tenth one we used the normality condition for the product $\mu_{A\ot W}$. Finally, 
 the last one follows by (\ref{fi-wcp}) for $\psi_{W}^{A}$.

On the other hand:
\begin{itemize}

\item[ ]$\hspace{0.38cm} (\mu_{A}\ot W)\co (A\ot \gamma)\co \mu_{A\ot V}\co (\theta\ot \theta) $

\item[ ]$=  (\mu_{A}\ot W)\co (A\ot \gamma)\co \mu_{A\ot V}\co ((\nabla_{A\ot V}\co S\co (\eta_{A}\ot W))\ot (\nabla_{A\ot V}\co S\co (\eta_{A}\ot W)))$

\item[ ]$=(\mu_{A}\ot W)\co (A\ot (T\co (\eta_{A}\ot V)))\co S\co \mu_{A\ot W}\co (\eta_{A}\ot W\ot \eta_{A}\ot W) $

\item[ ]$= \nabla_{A\ot W}\co \sigma_{W}^{A} $
\item[ ]$=\sigma_{W}^{A},  $

\end{itemize}
and then we proved (\ref{gamma-theta-sigma}). In the previous identities, the first one follows by definition, the second one follows by the normality condition for the product $\mu_{A\ot W}$ as well as because $S$ is multiplicative, the third one relies on (\ref{sigma-wcp}) and in the left linearity of $T$, and, finally, the fourth one  is a consequence of the properties of $\sigma_{W}^{A}$.

Finally, (\ref{gamma-theta-special}) follows easily using that $\nabla_{A\ot W}$ and $S$ are left $A$-linear, 
(\ref{preserv-idemp}), and the idempotent character of $\nabla_{A\ot V}$.

In the last step of  this proof we prove that (iii) $\Rightarrow $ (ii). First, note that if (\ref{gamma-theta-psi}) and (\ref{gamma-theta-idemp}) hold, then 
\begin{equation}
\label{gamma-theta-special-2}
(\mu_{A}\ot W)\co (A\ot \gamma)\co \theta=\nabla_{A\ot W}\co (\eta_{A}\ot W).
\end{equation}
also holds. Indeed:
$$\nabla_{A\ot W}\co (\eta_{A}\ot W)=\psi_{W}^{A}\co (W\ot \eta_{A})=(\mu_{A}\ot W)\co (\mu_{A}\ot \gamma)\co (A\ot \psi_{V}^{A})\co (\theta\ot \eta_{A})$$
$$= (\mu_{A}\ot W)\co (A\ot \gamma)\co\nabla_{A\ot V}\co \theta=(\mu_{A}\ot W)\co (A\ot \gamma)\co \theta.$$

Then, as a consequence, we obtain 
\begin{equation}
\label{gamma-theta-preunit-b}
\nu_{W}=(\mu_{A}\ot W)\co (A\ot \gamma)\co \nu_{V},
\end{equation}
because 
$$(\mu_{A}\ot W)\co (A\ot \gamma)\co \nu_{V} =(\mu_{A}\ot W)\co   (A\ot \gamma)\co (\mu_{A}\ot V)\co (A\ot \theta)\co \nu_{W}$$
$$=(\mu_{A}\ot W)\co (A\ot ((\mu_{A}\ot W)\co  (A\ot \gamma)\co \theta))\co \nu_{W}=(\mu_{A}\ot W)\co (A\ot (\nabla_{A\ot W}\co (\eta_{A}\ot W))\co \nu_{W}=\nabla_{A\ot W}\co \nu_{W}= \nu_{W}.$$

Define
$$T=(\mu_{A}\ot W)\co (A\ot \gamma),\;\; S=(\mu_{A}\ot V)\co (A\ot \theta).$$
Then the morphisms $T$ and $S$ are left $A$-linear. Also,  by (\ref{gamma-theta-preunit-b}) we obtain  (\ref{preserv-preunit}). Moreover, by the associativity of $\mu_{A}$ and (\ref{gamma-theta-special}) we prove that $S\co T=\nabla_{A\ot V}$, and, using (\ref{gamma-theta-special-2}), we show that $T\co S=\nabla_{A\ot W}$ holds. 
Finally we prove that $T$ is multiplicative because 
\begin{itemize}

\item[ ]$\hspace{0.38cm}  \mu_{A\otimes W}\co (T\ot T)$

\item[ ]$=(\mu_{A}\ot W)\co (\mu_{A}\ot \sigma_{W}^{A})\co (A\ot \psi_{W}^{A}\ot W)\co (T\ot T) $

\item[ ]$=(\mu_{A}\ot W)\co (\mu_{A}\ot ((\mu_{A}\ot W)\co (A\ot \gamma)\co \mu_{A\ot V}\co (\theta\ot \theta)))\co (A\ot ((\mu_{A}\ot W)\co (\mu_{A}\ot \gamma)\co (A\ot \psi_{V}^{A})  $
\item[ ]$\hspace{0.38cm}\co (\theta\ot A))\ot W)\co (T\ot T)$

\item[ ]$= T\co (\mu_{A}\ot V)\co (\mu_{A}\ot (\mu_{A\ot V}\co (((\mu_{A}\ot V)\co (A\ot \theta)\co \gamma)\ot A\ot V)\co (\mu_{A}\ot \psi_{V}^{A}\ot \theta)\co (A\ot ((\mu_{A}\ot V)$
\item[ ]$\hspace{0.38cm}\co (A\ot \theta)\co \gamma)\ot T) $

\item[ ]$= T\co (\mu_{A}\ot V)\co (\mu_{A}\ot (\mu_{A\ot V}\co ((\nabla_{A\ot V}\co (\eta_{A}\ot V))\ot A\ot V)\co (\mu_{A}\ot \psi_{V}^{A}\ot \theta)\co (A\ot (\nabla_{A\ot V}\co (\eta_{A}\ot V))\ot T)  $

\item[ ]$= T\co \mu_{A\ot V}\co (\mu_{A}\ot V\ot \theta)\co (A\ot \psi_{V}^{A}\ot W)\co (A\ot V\ot T)$

\item[ ]$= T\co  (\mu_{A}\ot V)\co (\mu_{A}\ot \sigma_{V}^{A})\co (A\ot ((\mu_A\ot V)\co (A\ot \psi_{V}^{A})\co (\psi_{V}^{A}\ot A))\ot V)\co (A\ot V\ot A\ot \theta)\co (A\ot V\ot T)$

\item[ ]$=T\co  (\mu_{A}\ot V)\co (\mu_{A}\ot \sigma_{V}^{A})\co (A\ot  \psi_{V}^{A}\ot V)\co (A\ot V\ot ((\mu_{A}\ot V)\co (A\ot ((\mu_{A}\ot V)\co (A\ot \theta)\co \gamma))))$

\item[ ]$=T\co  (\mu_{A}\ot V)\co (\mu_{A}\ot \sigma_{V}^{A})\co (A\ot  \psi_{V}^{A}\ot V)\co (A\ot V\ot ((\mu_{A}\ot V)\co (A\ot (\nabla_{A\ot V}\co (\eta_{A}\ot V)))))  $

\item[ ]$= T\co \mu_{A\ot V}, $
\end{itemize}
where the first equality follows by definition, the second one follows by (\ref{gamma-theta-psi}) and (\ref{gamma-theta-sigma}), and the third one is a consequence of the associativity of $\mu_{A}$ as well as the left linearity of $\mu_{A\ot V}$. The fourth and the eighth ones rely on (\ref{gamma-theta-special}) and the fifth one follows by the left linearity of the morphisms $\mu_{A\ot V}$ and $\nabla_{A\ot V}$. The sixth one relies on the definition of $\mu_{A\ot V}$ and on the left linearity of $\mu_{A\ot V}$. Finally, the seventh one follows by the associativity of $\mu_{A}$ and (\ref{wmeas-wcp}) for $\psi_{V}^{A}$, and the last one is a consequence of (\ref{vieja-proof}) and  the  left linearity of $\nabla_{A\ot V}$.

\end{proof}

\begin{prop}
\label{prop-equiv}
Let $(A\otimes V, \mu_{A\otimes V})$ and $(A\otimes W, \mu_{A\otimes W})$ be weak crossed products. Assume that there exist two morphisms $\gamma:V\rightarrow A\ot W,\;$ $\theta:W\rightarrow A\ot V$
satisfying the conditions (\ref{gamma-theta-sigma}) and (\ref{gamma-theta-special}). Then the following equality holds:
\begin{equation}
\label{sup-11}
(\mu_{A}\ot W)\co (\mu_{A}\ot \sigma_{W}^{A})\co (A\ot \gamma\ot W)\co (\psi_{V}^{A}\ot W)\co (V\ot \gamma)=
(\mu_{A}\ot W)\co (A\ot \gamma)\co \sigma_{V}^{A}.
\end{equation}
\end{prop}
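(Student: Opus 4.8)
The plan is to reduce both sides of (\ref{sup-11}) to $T\circ\sigma_{V}^{A}$, where $T:=(\mu_{A}\otimes W)\circ(A\otimes\gamma):A\otimes V\to A\otimes W$. Since the right-hand side of (\ref{sup-11}) is literally $T\circ\sigma_{V}^{A}$, the whole proposition becomes the identity $(\mu_{A}\otimes W)\circ(\mu_{A}\otimes\sigma_{W}^{A})\circ(A\otimes\gamma\otimes W)\circ(\psi_{V}^{A}\otimes W)\circ(V\otimes\gamma)=T\circ\sigma_{V}^{A}$. Alongside $T$ I would use $S:=(\mu_{A}\otimes V)\circ(A\otimes\theta):A\otimes W\to A\otimes V$; both $T$ and $S$ are morphisms of left $A$-modules for the corresponding actions, by associativity of $\mu_{A}$. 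Observe that hypothesis (\ref{gamma-theta-special}) is exactly $S\circ\gamma=\nabla_{A\otimes V}\circ(\eta_{A}\otimes V)$, and combining this with the left $A$-linearity of $S$ and of $\nabla_{A\otimes V}$ and the unit axiom of $A$ yields $S\circ T=\nabla_{A\otimes V}$. (We do not assume (\ref{gamma-theta-psi}) here, so we cannot also conclude $T\circ S=\nabla_{A\otimes W}$, and it will not be needed; this is what keeps the statement from being a mere corollary of the proof of Theorem~\ref{thm2-equiv}.)

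The first step is to massage the left-hand side. Using only bifunctoriality of $\otimes$ together with $\mu_{A}\otimes\sigma_{W}^{A}=(A\otimes\sigma_{W}^{A})\circ(\mu_{A}\otimes W\otimes W)$ and $T\otimes W=(\mu_{A}\otimes W\otimes W)\circ(A\otimes\gamma\otimes W)$, the first three factors collapse into $(\mu_{A}\otimes W)\circ(A\otimes\sigma_{W}^{A})\circ(T\otimes W)$. Then I would insert (\ref{gamma-theta-sigma}), i.e. $\sigma_{W}^{A}=T\circ\mu_{A\otimes V}\circ(\theta\otimes\theta)$, and push $T$ to the outside via the left $A$-linearity of $T$ and of $\mu_{A\otimes V}$, obtaining $(\mu_{A}\otimes W)\circ(A\otimes\sigma_{W}^{A})=T\circ\mu_{A\otimes V}\circ(S\otimes\theta)$. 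Composing with $T\otimes W$ and using $(S\otimes\theta)\circ(T\otimes W)=(S\circ T)\otimes\theta=\nabla_{A\otimes V}\otimes\theta$, the left-hand side of (\ref{sup-11}) becomes $T\circ\mu_{A\otimes V}\circ(\nabla_{A\otimes V}\otimes\theta)\circ(\psi_{V}^{A}\otimes W)\circ(V\otimes\gamma)$. Writing $\nabla_{A\otimes V}\otimes\theta=(\nabla_{A\otimes V}\otimes A\otimes V)\circ(A\otimes V\otimes\theta)$ and applying (\ref{otra-prop}), the idempotent is absorbed, $\mu_{A\otimes V}\circ(\nabla_{A\otimes V}\otimes\theta)=\mu_{A\otimes V}\circ(A\otimes V\otimes\theta)$, so the left-hand side of (\ref{sup-11}) equals $T\circ\mu_{A\otimes V}\circ(\psi_{V}^{A}\otimes\theta)\circ(V\otimes\gamma)$.

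The core of the argument is then the identity $\mu_{A\otimes V}\circ(\psi_{V}^{A}\otimes\theta)\circ(V\otimes\gamma)=\sigma_{V}^{A}$. To prove it, I would unfold $\mu_{A\otimes V}$ by its definition (\ref{prod-todo-wcp}) and rewrite $(\psi_{V}^{A}\otimes\theta)\circ(V\otimes\gamma)=(\psi_{V}^{A}\otimes A\otimes V)\circ(V\otimes((A\otimes\theta)\circ\gamma))$; then the two occurrences of $\psi_{V}^{A}$, the one coming from $\mu_{A\otimes V}$ and the displayed one, fuse by (\ref{wmeas-wcp}) tensored with $V$, turning the expression into $(\mu_{A}\otimes V)\circ(A\otimes\sigma_{V}^{A})\circ(\psi_{V}^{A}\otimes V)\circ(V\otimes(S\circ\gamma))$. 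Now I replace $S\circ\gamma$ by $\nabla_{A\otimes V}\circ(\eta_{A}\otimes V)$ via (\ref{gamma-theta-special}), delete the resulting $V\otimes\nabla_{A\otimes V}$ by (\ref{c11}), use $\psi_{V}^{A}\circ(V\otimes\eta_{A})=\nabla_{A\otimes V}\circ(\eta_{A}\otimes V)$ (immediate from (\ref{idem-wcp}) and the unit axioms of $A$), and delete the remaining $\nabla_{A\otimes V}\otimes V$ by (\ref{aw1}); what survives is $(\mu_{A}\otimes V)\circ(A\otimes\sigma_{V}^{A})\circ(\eta_{A}\otimes V\otimes V)=(\mu_{A}\otimes V)\circ(\eta_{A}\otimes\sigma_{V}^{A})=\sigma_{V}^{A}$. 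Here (\ref{c11}) and (\ref{aw1}) are legitimate because (\ref{idemp-sigma-inv}) gives $\nabla_{A\otimes V}\circ\sigma_{V}^{A}=\sigma_{V}^{A}$. Feeding this back into the previous step, the left-hand side of (\ref{sup-11}) equals $T\circ\sigma_{V}^{A}=(\mu_{A}\otimes W)\circ(A\otimes\gamma)\circ\sigma_{V}^{A}$, which is the right-hand side.

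The main obstacle is the bookkeeping of the idempotent $\nabla_{A\otimes V}$: it is created three times along the way (inside $S\circ T$, inside $S\circ\gamma$, and when turning $\psi_{V}^{A}\circ(V\otimes\eta_{A})$ into a unit term), and each time it must be absorbed by precisely the matching normalization identity, namely (\ref{otra-prop}), (\ref{c11}) and (\ref{aw1}), all of which rest on (\ref{idemp-sigma-inv}). The single genuinely non-formal move is the fusion of the two copies of $\psi_{V}^{A}$ using the compatibility (\ref{wmeas-wcp}); everything else is bifunctoriality of $\otimes$, associativity and unitality of $\mu_{A}$, and the $A$-linearity of $T$, $S$, $\mu_{A\otimes V}$ and $\nabla_{A\otimes V}$.
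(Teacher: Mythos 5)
Your proof is correct and takes essentially the same route as the paper: both reduce the left-hand side to $(\mu_{A}\otimes W)\circ(A\otimes\gamma)\circ\mu_{A\otimes V}\circ(\psi_{V}^{A}\otimes\theta)\circ(V\otimes\gamma)$ using (\ref{gamma-theta-sigma}), (\ref{gamma-theta-special}) and the normalization of $\mu_{A\otimes V}$, then fuse the two copies of $\psi_{V}^{A}$ via (\ref{wmeas-wcp}) and apply (\ref{gamma-theta-special}) once more. The only cosmetic difference is the endgame, where the paper invokes the twisted condition (\ref{twis-wcp}) directly while you use its packaged consequences (\ref{c11}) and (\ref{aw1}); the logical content is identical.
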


\begin{proof}  Indeed: 
\begin{itemize}

\item[ ]$\hspace{0.38cm} (\mu_{A}\ot W)\co (\mu_{A}\ot \sigma_{W}^{A})\co (A\ot \gamma\ot W)\co (\psi_{V}^{A}\ot W)\co (V\ot \gamma)$

\item[ ]$=(\mu_{A}\ot W)\co (\mu_{A}\ot ((\mu_{A}\ot W)\co (A\ot \gamma)\co \mu_{A\ot V}\co (\theta\ot \theta)))\co (A\ot \gamma\ot W)\co (\psi_{V}^{A}\ot W)\co (V\ot \gamma)  $

\item[ ]$=(\mu_{A}\ot W)\co (\mu_{A}\ot \gamma)\co (A\ot ( \mu_{A\ot V}\co (((\mu_{A}\ot V)\co (A\ot \theta)\co \gamma)\ot \theta)))\co (\psi_{V}^{A}\ot W)\co (V\ot \gamma) $

\item[ ]$= (\mu_{A}\ot W)\co (\mu_{A}\ot \gamma)\co (A\ot ( \mu_{A\ot V}\co ((\nabla_{A\ot V}\co (\eta_{A}\ot V))\ot \theta)))\co (\psi_{V}^{A}\ot W)\co (V\ot \gamma) $

\item[ ]$= (\mu_{A}\ot W)\co (A\ot \gamma)\co \mu_{A\ot V}\co   (\psi_{V}^{A}\ot \theta)\co (V\ot \gamma)$

\item[ ]$=   (\mu_{A}\ot W)\co (A\ot \gamma)\co (\mu_A\ot V)\co (\mu_A\ot \sigma_{V}^{A})\co (A\ot
\psi_{V}^{A}\ot V)\co   (\psi_{V}^{A}\ot \theta)\co (V\ot \gamma) $

\item[ ]$=  (\mu_{A}\ot W)\co (A\ot \gamma)\co (\mu_A\ot V)\co (A\ot \sigma_{V}^{A})\co  (\psi_{V}^{A}\ot V)\co 
(V\ot ((\mu_{A}\ot V)\co (A\ot \theta)\co \gamma))$

\item[ ]$=(\mu_{A}\ot W)\co (A\ot \gamma)\co (\mu_A\ot V)\co (A\ot \sigma_{V}^{A})\co  (\psi_{V}^{A}\ot V)\co 
(V\ot ((\nabla_{A\ot V}\co (\eta_{A}\ot V)))  $

\item[ ]$=(\mu_{A}\ot W)\co (A\ot \gamma)\co (\mu_A\ot V)\co (A\ot \sigma_{V}^{A})\co  (\psi_{V}^{A}\ot V)\co 
(V\ot ((\psi_{V}^{A}\co (V\ot \eta_{A})))  $

\item[ ]$= (\mu_{A}\ot W)\co (\mu_{A}\ot \gamma)\co  (A\ot \psi_{V}^{A})\co ( \sigma_{V}^{A}\ot \eta_{A})$

\item[ ]$=(\mu_{A}\ot W)\co (A\ot \gamma)\co  \nabla_{A\ot V}\co \sigma_{V}^{A}$

\item[ ]$= (\mu_{A}\ot W)\co (A\ot \gamma)\co \sigma_{V}^{A}, $

\end{itemize}

where the first and the seventh equalities follow by (\ref{gamma-theta-special}), the second one is a consequence of the associativity of $\mu_{A}$ and the left linearity of $\mu_{A\ot V}$, the third one relies on (\ref{gamma-theta-special}), and the fourth one follows by the left linearity of $\mu_{A\ot V}$ and $\nabla_{A\ot V}$. In the fifth one we used the definition of $\mu_{A\ot V}$, and the sixth one follows by (\ref{wmeas-wcp}). In the eighth and the tenth ones we applied the definition of $\nabla_{A\ot V}$,  the ninth one relies on the twisted condition (\ref{twis-wcp}) for ${\Bbb A}_{V}$, and the last one follows by the properties of $\sigma_{V}^{A}$.

\end{proof}

\begin{exemplo}
{\rm In this example we apply Theorem \ref{thm2-equiv} to the study of equivalent crossed products in the sense of Brzezi\'nski. First we recall from \cite{tb-crpr} the construction of Brzezi\'nski's crossed product in a strict monoidal category: Let $(A,\eta_{A},\mu_{A})$ be a monoid and $V$ an object equipped with a distinguished morphism $\eta_{V}:K\rightarrow V$. Then the object $A\ot V$ is a monoid with unit $\eta_{A}\ot \eta_{V}$ and whose product has the property $\mu_{A\ot V}\co (A\ot \eta_{V}\ot A\ot V)=\mu_{A}\ot V$, if and only if there exists two morphisms $\psi_{V}^{A}:V\ot A\rightarrow A\ot V$, $\sigma_{V}^{A}:V\ot V\rightarrow A\ot V$ satisfying  (\ref{wmeas-wcp}), the twisted condition (\ref{twis-wcp}), the cocycle condition (\ref{cocy2-wcp}) and  
\begin{equation}
\label{brz1} 
\psi_{V}^{A}\co (\eta_{V}\ot A)=A\ot \eta_{V}, 
\end{equation}
\begin{equation}
\label{brz2} 
\psi_{V}^{A}\co (V\ot \eta_{A})=\eta_{A}\ot V, 
\end{equation}
\begin{equation}
\label{brz3} 
\sigma_{V}^{A}\co (\eta_{V}\ot V)=\sigma_{V}^{A}\co (V\ot \eta_{V})=\eta_{A}\ot V.
\end{equation}
If this is the case, the product of $A\ot V$ is the one defined in (\ref{prod-todo-wcp}).  Note that Brzezi\'nski's crossed products are examples of weak crossed products where the associated idempotent is the identity, that is, $\nabla_{A\ot V}=id_{A\ot V}$. Also, in this case the preunit $\nu_{V}=\eta_{A}\ot \eta_{V}$ is a unit. 

In this setting (i) and (ii) of Theorem \ref{thm2-equiv} are the same and then this theorem can be enunciated in the following way: Let $(A\otimes V, \mu_{A\otimes V})$ and $(A\otimes W, \mu_{A\otimes W})$ be  Brzezi\'nski's crossed products  with distinguished morphism $\eta_{V}$ and $\eta_{W}$ respectively.  The following assertions are equivalent:
\begin{itemize}
\item[(i)] The  crossed products  $(A\otimes V, \mu_{A\otimes V})$ and $(A\otimes W, \mu_{A\otimes W})$ are equivalent.
\item[(ii)] There exist two morphisms 
$$\gamma:V\rightarrow A\ot W,\;\; \theta:W\rightarrow A\ot V$$
satisfying the conditions (\ref{gamma-theta-psi}), (\ref{gamma-theta-sigma}) and 
\begin{equation}
\label{gamma-theta-special-BRZ}
(\mu_{A}\ot V)\co (A\ot \theta)\co \gamma=\eta_{A}\ot V.
\end{equation} 
\end{itemize}
Note that (\ref{gamma-theta-special-BRZ}) is translation for  Brzezi\'nski's crossed products of (\ref{gamma-theta-special}).

In this context, (\ref{gamma-theta-idemp}) is trivial because $\nabla_{A\ot V}=id_{A\ot V}$ and $\nabla_{A\ot W}=id_{A\ot W}$. Moreover,  if we assume (\ref{gamma-theta-psi}), 
\begin{equation}
\label{gamma-theta-special-BRZ-b}
(\mu_{A}\ot W)\co (A\ot \gamma)\co \theta=\eta_{A}\ot W.
\end{equation} 
holds (see (iii) $\Rightarrow $ (ii) of the proof of Theorem \ref{thm2-equiv}).

On the other hand, by (\ref{gamma-theta-psi}), (\ref{gamma-theta-sigma}) and (\ref{gamma-theta-special-BRZ}), we obtain that 
\begin{equation}
\label{aux-brz}
(\mu_{A}\ot W)\co (A\ot ((\mu_{A}\ot W)\co (A\ot \sigma_{W}^{A})\co ((\gamma\co \eta_{V})\ot W)=id_{A\ot W}
\end{equation}
holds. Indeed:
\begin{itemize}

\item[ ]$\hspace{0.38cm} (\mu_{A}\ot W)\co (A\ot ((\mu_{A}\ot W)\co (A\ot \sigma_{W}^{A})\co ((\gamma\co \eta_{V})\ot W) $

\item[ ]$= (\mu_{A}\ot W)\co (A\ot ((\mu_{A}\ot W)\co (A\ot ((\mu_{A}\ot W)\co (A\ot \gamma)\co \mu_{A\ot V}\co (\theta\ot \theta)))\co ((\gamma\co \eta_{V})\ot W) $

\item[ ]$=(\mu_{A}\ot W)\co (A\ot ((\mu_{A}\ot W)\co (A\ot \gamma)\co \mu_{A\ot V}\co (((\mu_{A}\ot V)\co (A\ot \theta)\co \gamma\co \eta_{V})\ot \theta)  $

\item[ ]$=(\mu_{A}\ot W)\co (A\ot ((\mu_{A}\ot W)\co (A\ot \gamma)\co \mu_{A\ot V}\co (\eta_{A}\ot \eta_{V}\ot \theta)))$

\item[ ]$=(\mu_{A}\ot W)\co (A\ot ((\mu_{A}\ot W)\co (A\ot \gamma)\co \theta))  $

\item[ ]$=id_{A\ot W}$

\end{itemize}
where the first equality follows by (\ref{gamma-theta-sigma}), the second one by the associativity of $\mu_{A}$, the third one by (\ref{gamma-theta-special-BRZ}), the fourth one by the properties of $\mu_{A\ot V}$ and in the last one we applied (\ref{gamma-theta-special-BRZ-b}).

Therefore, composing in (\ref{aux-brz}) with $\eta_{A}\ot \eta_{W}$ we obtain 
\begin{equation}
\label{gamma-theta-preunit-b-BRZ}
\gamma\co \eta_{V}=\eta_{A}\ot \eta_{W}
\end{equation}
 Then, as a consequence of (\ref{gamma-theta-special-BRZ}) and (\ref{gamma-theta-preunit-b-BRZ}), we have 
\begin{equation}
\label{gamma-theta-preunit-BRZ}
\theta\co \eta_{W}=\eta_{A}\ot \eta_{V}.
\end{equation}
which is the translation to this setting of (\ref{gamma-theta-preunit}).

Therefore, for two Brzezi\'nski's crossed products $(A\otimes V, \mu_{A\otimes V})$ and $(A\otimes W, \mu_{A\otimes W})$ with distinguished morphism $\eta_{V}$ and $\eta_{W}$, the equivalence between them only depends of two morphisms $\gamma:V\rightarrow A\ot W,\;$ $\theta:W\rightarrow A\ot V$ satisfying (\ref{gamma-theta-psi}), (\ref{gamma-theta-sigma}) and (\ref{gamma-theta-special-BRZ}). 

This approach to the characterization of the notion of equivalence between Brzezi\'nski's  crossed products implies a  substantial  improvement of the result demonstrated by Panaite in Theorem 2.3 of \cite{Pan3} (see also Theorem 2.1 of \cite{Pan1}).  In this Theorem the author proved that two Brzezi\'nski's crossed products $(A\otimes V, \mu_{A\otimes V}^{1})$ and $(A\otimes V, \mu_{A\otimes V}^2)$, associated to the quadruples ${\Bbb A}_{V}^{1}=(A,V, \psi_{V}^{A,1}, \sigma_{V}^{A,1})$, ${\Bbb A}_{V}^{2}=(A,V, \psi_{V}^{A,2}, \sigma_{V}^{A,2})$, and with distinguished morphism $\eta_{V}^1$ and $\eta_{V}^2$, are equivalent if, and only if, there exist morphisms $\gamma$, $\theta:V\rightarrow A\ot V$ such that the following equalities are satisfied: 
\begin{equation}
\label{gamma-theta-psi-Pan}
\psi_{V}^{A,2}=(\mu_{A}\ot V)\co (\mu_{A}\ot \gamma)\co (A\ot \psi_{V}^{A,1})\co (\theta\ot A),
\end{equation}
\begin{equation}
\label{gamma-theta-sigma-Pan}
\sigma_{V}^{A,2}=(\mu_{A}\ot V)\co (A\ot \gamma)\co \mu_{A\ot V}^1\co (\theta\ot \theta) ,
\end{equation}
\begin{equation}
\label{gamma-theta-unit-Pan}
\theta\co \eta_{V}^2=\eta_{A}\ot \eta_{V}^1, \;\; \gamma\co \eta_{V}^1=\eta_{A}\ot \eta_{V}^2,
\end{equation}
\begin{equation}
\label{gamma-theta-special-BRZ-Pan}
(\mu_{A}\ot V)\co (A\ot \theta)\co \gamma=\eta_{A}\ot V,
\end{equation}
\begin{equation}
\label{gamma-theta-special-BRZ-1-Pan}
(\mu_{A}\ot V)\co (A\ot \gamma)\co \theta=\eta_{A}\ot V,
\end{equation}
\begin{equation}
\label{sup-11-Pan}
(\mu_{A}\ot V)\co (\mu_{A}\ot \sigma_{V}^{A,2})\co (A\ot \gamma\ot V)\co (\psi_{V}^{A,1}\ot V)\co (V\ot \gamma)=
(\mu_{A}\ot V)\co (A\ot \gamma)\co \sigma_{V}^{A,1}.
\end{equation}

Note that, using the results of our paper, by Proposition \ref{prop-equiv}, (\ref{sup-11-Pan}) follows from 
(\ref{gamma-theta-sigma-Pan}) and (\ref{gamma-theta-special-BRZ-Pan}). On the other hand, the equality (\ref{gamma-theta-special-BRZ-1-Pan}) is a consequence of (\ref{gamma-theta-psi-Pan}). Finally,  (\ref{gamma-theta-unit-Pan}) can be proved using  (\ref{gamma-theta-psi-Pan}),  (\ref{gamma-theta-sigma-Pan}) and (\ref{gamma-theta-special-BRZ-Pan}).

}
\end{exemplo}

\begin{exemplo}
{\rm 
Assume that ${\mathcal C}$ is symmetric with symmetry isomorphism $c$. Now we will see how we can use Theorem \ref{thm2-equiv} to improve the definition of equivalence of weak crossed products for weak Hopf algebras given in \cite{nrm-hhapl} and in \cite{ana1}. Let $H$ be a weak Hopf monoid with unit $\eta_{H}$, product $\mu_{H}$, counit $\varepsilon_{H}$, coproduct $\delta_{H}$, and antipode $\lambda_{H}$. Denote by $\delta_{H\ot H}$ the morphism $(H\ot c_{H,H}\ot H)\co (\delta_{H}\ot \delta_{H}).$
	
Recall from \cite{nrm-hhapl} and \cite{ana1} that if $(A, \varphi_A)$ is a left weak $H$-module monoid and $\sigma:H\otimes H\rightarrow A$ is a morphism, we say that the twisted condition is satisfied if:
	\begin{equation}\label{twisted-wha}
	\begin{array}{l}
	\mu_A\circ ((\varphi_A\circ (H\ot \varphi_A))\ot A)\circ (H\ot H\ot c_{A,A})\circ (((H\ot H \ot \sigma)\circ \delta_{H\ot H})\ot A) =\\
	\mu_A\circ (A\ot \varphi_A)\circ (((\sigma\ot \mu_H)\circ \delta_{H\ot H})\ot A).
	\end{array}
	\end{equation}
	We say that the cocycle condition holds if:
	\begin{equation}\label{cocy-wha}
	\begin{array}{l}
	\mu_A\circ (\varphi_A\ot \sigma)\circ (H\ot c_{H,A}\ot \mu_H)\circ (\delta_H\ot \sigma\ot H\ot H)\circ (H\ot \delta_{H\ot H}) = \\
	\mu_A\circ (\sigma\ot \sigma)\circ (H\ot H\ot \mu_H\ot H)\circ (\delta_{H\ot H}\ot H).
	\end{array}
	\end{equation}
	And the normal condition is satisfied if:
	\begin{equation}\label{normal-wha}
	\sigma\circ (\eta_H\ot H) = \sigma\circ (H\ot \eta_H) = u_{1}
	\end{equation}
where $u_{1}=\varphi_A\circ (H\ot \eta_A).$
	Under these conditions, we obtain a weak crossed product on $A\ot H$ with preunit $\nabla_{A\ot H}\circ (\eta_A\ot \eta_H)$ (see \cite{nrm-hhapl}, \cite{ana1}) and such that:
	\begin{equation}
	\label{psi-wha}
	\psi_{H}^A = (\varphi_A\ot H)\circ (H\ot c_{H,A})\circ (\delta_H\ot A)
	\end{equation}
	\begin{equation}\label{sigma-wha}
	\sigma_{H}^A = (\sigma\ot \mu_H) \circ \delta_{H\ot H}
	\end{equation}
	\begin{equation}\label{nabla-wha}
	\nabla_{A\ot H} = ((\mu_A\co (A\ot u_{1}))\ot H)\circ (A\ot \delta_{H}).
	\end{equation}
	In this context, when necessary, we will use the following notation: we will use $\psi_{H, \varphi_A}^A$ for the morphism defined in (\ref{psi-wha}) and $\nabla_{A\ot H}^{\varphi_A}$ for the morphism in (\ref{nabla-wha}). Moreover, we will denote the image of the idempotent by $A\times_{\varphi_A}^{\sigma} H$.
	
	Essentially, to define this weak crossed product, instead of considering any object $V$ in the category we take a weak Hopf algebra $H$. This fact permits to endow $A\ot H$ with a $H$-comodule structure given by
	\begin{equation}\label{comod-struc}
	\rho_{A\ot H} = A\ot \delta_H
	\end{equation}
	This comodule structure is inherited by the image of the idempotent $A\times_{\varphi_A}^{\sigma} H$, and it is given by:
	\begin{equation}\label{comod-struc-image}
	\rho_{A\times_{\varphi_A}^{\sigma} H} = (p_{A\ot H}\ot H)\circ \rho_{A\ot H}\circ i_{A\ot H}.
	\end{equation}
Moreover, the idempotent $\nabla_{A\ot H}^{\varphi_A}$ is a morphism of right $H$-comodules \cite{nrm-hhapl}, \cite{ana1}.	
	Taking into account these facts, we define the equivalence of weak crossed products for weak Hopf algebras as follows \cite{nrm-hhapl}, \cite{ana1}: Let $(A, \phi_A)$ be another left weak $H$-module monoid, and let $\beta:H\ot H \rightarrow A$ be a morphism that satisfies the twisted condition (\ref{twisted-wha}), the cocycle condition (\ref{cocy-wha}), and the normal condition (\ref{normal-wha}). Consider the quadruples $(A, H, \psi_{H, \varphi_A}^A, \sigma_{H}^{A})$ and $(A, H, \psi_{H, \phi_A}^A, \beta_{H}^{A})$, that induce two weak crossed products on $A\ot H$, and call $(A\ot H, \mu_{A\ot H}^{\varphi_A, \sigma})$ and $(A\ot H, \mu_{A\ot H}^{\phi_A, \beta})$ the corresponding weak crossed products. We say that they are equivalent if, and only if, there exists an isomorphism of monoids, left $A$-modules and right $H$-comodules (see \cite{nrm-hhapl}): 
	\[\alpha:A\times_{\varphi_A}^{\sigma}H\rightarrow A\times_{\phi_A}^{\beta}H. \]
	By virtue of Theorem \ref{thm2-equiv} we know that there exist two morphisms $S:A\ot H\to A\ot H$ and $T:A\ot H\to A\ot H$ of left $A$-modules that satisfy (\ref{preserv-preunit}), (\ref{preserv-product}) and (\ref{preserv-idemp}). Following the proof for $T$ to be multiplicative, it is possible to prove that $S$ is also multiplicative. Moreover, they satisfy (\ref{preserv-comp}). Finally, $T$ and $S$ are both morphisms of right $H$ comodules. Indeed, $T=i_{A\ot H}^{\phi_A}\circ \alpha\circ p_{A\ot H}^{\varphi_A}$ and $S=i_{A\ot H}^{\varphi_A}\circ \alpha^{-1}\circ p_{A\ot H}^{\phi_A}$, this is, both morphisms can be expressed as composition of morphisms of right $H$ comodules. Thus, we can use Definition \ref{wcp-equiv-def} and Theorem \ref{thm2-equiv} to obtain the equivalence of weak crossed products in the sense of \cite{ana1} as an example of our theory. Actually, the conditions exposed in the present paper are more general than the ones in \cite{ana1}, as we do not require $S$ and $T$ to be multiplicative. In a similar way, Theorem \ref{thm2-equiv} improves the conditions in \cite{nrm-hhapl} for two weak crossed products to be equivalent.
	
	Finally, observe that (iii) of Theorem \ref{thm2-equiv} is a generalization of the gauge equivalence defined in \cite{ana1}. Indeed, if we are in the weak Hopf algebra context, the morphisms $\gamma:H\to A\ot H$ and $\theta:H\to A\ot H$ are of right $H$-comodules. If we take $f = (A\ot \varepsilon_H)\circ \gamma$ and $g= (A\ot \varepsilon_H)\circ \theta$, we obtain morphisms $f,g:H\to A$ that give the gauge equivalence between the two crossed products. Observe that we can recover $\gamma$ and $\theta$ as
	\[\gamma = (f\ot H)\circ \delta_H,\;\; \theta = (g\ot H)\circ \delta_H.\]	
	Thus, composing in both sides of equality (\ref{gamma-theta-special}) we obtain $f\ast g = \varphi_A\circ (H\ot \eta_A)$, where $\ast$ denotes the usual convolution product. Moreover, if we compose with $A\ot \varepsilon_H$ in both sides of equality (\ref{gamma-theta-psi}) we obtain that
	\[\phi_A = \mu_A\circ (\mu_A\ot A)\circ (f\ot \varphi_A \ot g)\circ (H\ot H\ot c_{H,A})\circ (H\ot \delta_H\ot A)\circ (\delta_H\ot A). \]
	Now, composing with $(A\ot \varepsilon_H)$ in equality (\ref{gamma-theta-sigma}) we get:
	\[\beta = \mu_A\circ (\mu_A\ot A)\circ (\mu_A\ot A\ot A)\circ (f\ot (\varphi_A\circ (H\ot f))\ot \sigma \ot (g\circ \mu_H))\circ (\delta_H\ot H\ot \delta_{H\ot H})\circ \delta_{H\ot H}. \]
	This is, the pair $(f,g)$ satisfies the conditions of gauge equivalence given in \cite{ana1}.}
	\end{exemplo}

\begin{nota}
{\rm Note that we can obtain similar results about the equivalence between weak crossed products if we work int e mirror setting, i.e.,  if we use  quadruples ${\Bbb A}^{V}=(V,A,\psi_{A}^{V}, \sigma_{A}^{V})$ where $\psi_{A}^{V}:A\ot V\rightarrow V\ot A$ and $\sigma_{A}^{V}: V\ot V\rightarrow V\ot A$ satisfy the suitable conditions that define a weak crossed product on $V\ot A$ with preunit $\nu^{V}:K\rightarrow V\ot A$.}
\end{nota}

\section{The dual setting: equivalences between weak crossed coproducts} 

The theory of weak crossed coproducts  can be obtained from the corresponding ones for weak crossed
products defined in $A\otimes V$ or  $V\otimes A$ by dualization.  For the convenience of the reader we collect in the following paragraphs, the mirror version of the theory presented in the appendix of \cite{mra-preunit} for weak crossed coproducts.

Let $C$ be a comonoid and $V$ an object. Suppose
that there exists a morphism $\chi_{C}^{V}:V\otimes C\rightarrow C\otimes V$
such that the following equality holds:
\begin{equation}
\label{co-wmeas-wcp}
(C\ot \chi_{C}^{V})\circ (\chi_{C}^{V}\ot C)\circ
(V\ot \delta_{C})=( \delta_{C}\ot V)\circ \chi_{C}^{V}
\end{equation}

As a consequence the morphism $\Gamma_{V\otimes C}:V\otimes C\rightarrow V\otimes C$ defined by
\begin{equation}\label{co-idem-wcp}
\Gamma_{V\otimes C}=(\varepsilon_{C}\ot V\ot C)\circ
(\chi_{C}^{V}\ot C)\circ (V\ot \delta_{C}).
\end{equation}
 is  idempotent. Moreover, $\Gamma_{V\otimes C}$ is a  right $C$-comodule morphism
for  the right  coaction  $\rho_{V\otimes C}=V\ot \delta_{C}$.

Henceforth we will consider quadruples ${\Bbb C}^{V}=(C, V, \chi_{C}^{V}, \tau_{C}^{V})$ where $C$,
$V$ and $\chi_{C}^{V}$ satisfy the condition (\ref{co-wmeas-wcp}) and
$\tau_{C}^{V}:V\otimes C\rightarrow V\otimes V$ is a morphism in ${\mathcal C}$.
For the morphism $\Gamma_{V\otimes C}$ defined in (\ref{co-idem-wcp}) we denote by $V\Box C$ its  image and by
$i_{V\otimes C}:V\Box C\rightarrow V\otimes C$, $p_{V\otimes C}:V\otimes C\rightarrow V\Box C$ the associated injection and the projection respectively.

Following the ideas behind the theory of weak crossed products, we will set two properties that guarantee the coassociativity of certain weak crossed coproduct on $V\otimes C$.
We will say that a quadruple ${\Bbb C}^{V}$  satisfies the
cotwisted
condition if
\begin{equation}\label{co-twis-wcp}
(C\ot \tau_{C}^{V})\circ (\chi_{C}^{V}\ot C)\circ
(V\ot \delta_{C})=(\chi_{C}^{V}\ot V)\circ
(V\ot \chi_{C}^{V})\circ (\tau_{C}^{V}\ot C)\circ
(V\ot \delta_{C}),
\end{equation}
 and the cycle
condition holds if
\begin{equation}\label{co-cocy-wcp}
(V\ot \tau_{C}^{V})\circ (\tau_{C}^{V}\ot V)\circ
(V\ot \delta_{C})
\end{equation}
$$=(\tau_{C}^{V}\ot V)\circ (V\ot \chi_{C}^{V})\circ
(\tau_{C}^{V}\ot C)\circ (V\ot \delta_{C})
.$$

By virtue of the mirror version of Proposition A.9  of \cite{mra-preunit} we will consider from now on,
and without loss of generality, that
\begin{equation}
\tau_{C}^{V}\circ\Gamma_{V\otimes C} = \tau_{C}^{V}
\end{equation}
for all quadruples ${\Bbb C}^{V}$.

For a quadruple ${\Bbb C}^{V}$  define the coproduct
\begin{equation}\label{co-prod-todo-wcp}
\delta_{V\otimes  C} =(V\otimes \chi_{C}^{V}\otimes C)\circ
(\tau_{C}^{V}\ot \delta_{C})\circ (V\ot \delta_{C})
\end{equation}
and let $\delta_{V\Box C}$ be the coproduct
\begin{equation}
\label{co-prod-wcp} \delta_{V\Box C} = (p_{V\otimes C}\otimes
p_{V\otimes C})\circ\delta_{V\otimes C}\circ i_{V\otimes C}.
\end{equation}

If the cotwisted condition (\ref{co-twis-wcp}) and
the cycle condition (\ref{co-cocy-wcp}) hold, $\delta_{V\otimes C}$ is a coassociative
coproduct that it is conormalized with respect to $\Gamma_{V\otimes
C}$ (i.e. $\delta_{C\otimes V}\circ \Gamma_{V\otimes
C}=\delta_{V\otimes C}=(\Gamma_{V\otimes
C}\otimes \Gamma_{V\otimes
C})\circ \delta_{V\otimes C}$).  Also, 
\begin{equation}
\label{co-otra-prop} \delta_{V\otimes C}=(V\ot C\otimes \Gamma_{V\otimes C})\circ \delta_{V\otimes C}
\end{equation}
and therefore
\begin{equation}
\label{co-vieja-proof} \delta_{V\otimes C}=(\Gamma_{V\otimes
C}\otimes V\ot C)\circ \delta_{V\otimes C}.
\end{equation}

As a consequence $\delta_{V\Box C}$ is also a
coassociative coproduct (mirror version of Proposition A.10 of \cite{mra-preunit}). Under these circumstances we say that $(V\otimes C, \delta_{V\otimes C})$ is a weak crossed coproduct. Trivially, $\delta_{V\ot C}$ is right $C$-colinear  for  the right  coactions $\rho_{V\otimes C}$ and $\rho_{V\otimes C\otimes V\otimes C}=V\ot C\ot \rho_{V\otimes
C}$. Moreover, $\delta_{V\Box C}$ is right $C$-colinear for $\rho_{V\Box C}=(p_{V\ot C}\ot C)\co \rho_{V\otimes C}\co i_{V\ot C}$ and $\rho_{V\Box C\ot V\Box C}=V\Box C\ot \rho_{V\Box C}$.

Let $C$ be a comonoid and $V$ and object in ${\mathcal C}$. If $\Delta_{V\otimes
C}$ is a coassociative coproduct defined in
$V\otimes C$ with  precounit $\upsilon^{V}: V\otimes C\rightarrow K$ , i.e. a morphism
satisfying
\begin{equation}
(\upsilon^{V}\ot V\ot C)\circ \Delta_{V\otimes C}=(V\ot C\ot \upsilon^{V})\circ 
\Delta_{V\otimes C}=(((\upsilon^{V}\otimes \upsilon^{V})\co \Delta_{V\otimes C}) \ot V\ot C)\circ
\Delta_{V\otimes C},
\end{equation}
we obtain that the image of the idempotent morphism $$\Gamma_{V\otimes
C}^{\upsilon^{V}}=(\upsilon^{V}\ot V\otimes C)\circ \Delta_{V\otimes C}:V\otimes
C\rightarrow V\otimes C,$$
denoted by $V\Box C$, is a comonoid with coproduct
$$\Delta_{V\Box C}=(p_{V\otimes C}^{\upsilon^{V}}\otimes p_{V\otimes C}^{\upsilon^{V}})\circ
\Delta_{V\otimes C}\circ i_{V\otimes C}^{\upsilon^{V}},$$
and counit $\varepsilon_{V\Box C}=\upsilon^{V}\circ i_{V\otimes C}^{\upsilon^{V}}$, where
$p_{V\otimes C}^{\upsilon^{V}}$ and $i_{V\otimes C}^{\upsilon^{V}}$ are the injection and
the projection associated to the idempotent  (see the mirror version of Proposition A.5 of \cite{mra-preunit}).

If moreover, $\Delta_{V\otimes C}$ is right $C$-colinear for the coactions
$\rho_{V\otimes C}$ and $\rho_{V\otimes C\otimes V\otimes C}$  and conormalized with respect to $\Gamma_{V\otimes
C}^{\upsilon^{V}}$, the morphism
\begin{equation}
\label{gamma-nu}
\omega_{\upsilon^{V}}:V\otimes C\rightarrow  C,\; \omega_{\upsilon^{V}} =
(\upsilon^{V}\ot C)\circ (V\ot \delta_{C})
\end{equation}
is comultiplicative and right $C$-colinear for $\rho_{C}=\delta_{C}$. Although
$\omega_{\upsilon^{V}}$
is not a comonoid morphism, because $V\otimes C$ is not a
comonoid, we have that $\varepsilon_{C}\circ \omega_{\upsilon^{V}} =
\upsilon^{V}$, and, as a consequence, the morphism $\bar{\omega}_{\upsilon^{V}}
=\omega_{\upsilon^{V}}\circ  i_{V\otimes C}^{\upsilon^{V}}
:V\Box C\rightarrow C$ is a comonoid morphism.

In the following theorem we give a characterization of weak
crossed coproducts with precounit. This result is the dual version of the one proved for weak crossed products in the first section (see Theorem A.13. of \cite{mra-preunit} for the mirror version):

\begin{teo}
\label{co-thm1-wcp} Let $C$ be a comonoid, $V$ an object and
$\Delta_{V\otimes C}:V\otimes C\rightarrow  V\otimes C\otimes V\otimes C$ a
morphism of right $C$-comodules for  the right  coaction $\rho_{V\otimes C}$ and $\rho_{V\otimes C\otimes V\otimes C}$.

Then the following statements are equivalent:
\begin{itemize}
\item[(i)] The coproduct $\Delta_{V\otimes C}$ is coassociative with precounit
$\upsilon^{V}$ and normalized with respect to $\Gamma_{V\otimes C}^{\upsilon^{V}}.$

\item[(ii)] There exist morphisms $\chi_{C}^{V}:V\otimes C\rightarrow C\otimes
V$, $\tau_{C}^{V}:V\otimes C\rightarrow V\otimes V$ and $\upsilon^{V}:V\otimes
C\rightarrow K$ such that if $\delta_{V\otimes C}$ is the coproduct defined
in (\ref{co-prod-todo-wcp}), the pair $(V\otimes C, \delta_{V\otimes
C})$ is a weak crossed coproduct with $\Delta_{V\otimes C} =
\delta_{V\otimes C}$ satisfying:
\begin{equation}\label{co-pre1-wcp}
( \upsilon^{V}\ot V)\circ (V\ot \chi_{C}^{V})\circ (\tau_{C}^{V}\ot C)\circ (V\ot \delta_{C})=(V\ot \varepsilon_{C})\circ \Gamma_{V\otimes C}
\end{equation}
\begin{equation}\label{co-pre2-wcp}
(V\ot \upsilon^{V})\circ (\tau_{C}^{V}\ot C)\circ (V\ot \delta_{C})=(V\ot \varepsilon_{C})\circ \Gamma_{V\otimes C}\end{equation}
\begin{equation}\label{co-pre3-wcp}
(C\ot \upsilon^{V})\circ (\chi_{C}^{V}\ot C)\circ
(V\ot\delta_{C}) = \omega_{\upsilon^{V}},
\end{equation}
\end{itemize}
where $\omega_{\upsilon^{V}}$ is the morphism defined in
(\ref{gamma-nu}). In this case $\upsilon^{V}$ is a precounit for
$\delta_{V\otimes C}$, and  the idempotent morphism of the weak crossed
coproduct $\Gamma_{V\otimes C}$ is the idempotent $\Gamma_{V\otimes
C}^{\nu^{V}}$
\end{teo}

\begin{nota}
{\rm Note that in the proof of the previous
theorem, we obtain that  
\begin{equation}\label{co-fi-wcp}
\chi_{C}^{V} = (\omega_{	\upsilon^{V}}\ot V\ot \varepsilon_{C})\co \delta_{V\ot C},
\end{equation}
\begin{equation}\label{co-sigma-wcp}
\tau_{C}^{V} = (V\ot \varepsilon_{C}\ot V\ot \varepsilon_{C})\co \delta_{V\ot C}, 
\end{equation}
hold. Also, by (\ref{co-pre3-wcp}), we have 
\begin{equation}\label{co-preunit-idemp}
\upsilon^{V}\co \Gamma_{V\otimes C}=\upsilon^{V}.
\end{equation}

}
\end{nota}

\begin{defin}\label{wcp-pre-def}
{\rm We will say that a weak crossed coproduct $(C\otimes V, \delta_{C\otimes V})$ is a weak crossed coproduct with precounit 
$\upsilon{V}:V\ot C\rightarrow K$, if  (\ref{co-pre1-wcp}), (\ref{co-pre2-wcp}) and (\ref{co-pre3-wcp}) hold.}
\end{defin}

As a corollary of the previous result we have:

\begin{corol}\label{co-corol-wcp}
If $(C\otimes V, \delta_{C\otimes V})$ is a weak crossed coproduct
with precounit $\upsilon$, then $C\Box V$ is a coalgebra with the
coproduct defined in (\ref{co-prod-wcp}) and  counit
$\varepsilon_{C\Box V}=\upsilon^V\circ i_{C\otimes V}$.
\end{corol}

In the following definition we introduce the notion of equivalent weak crossed coproducts. 

\begin{defin}\label{wcp-equiv-def}
{\rm Let  $(V\otimes C, \delta_{V\otimes C})$ and $(W\otimes C, \delta_{W\otimes C})$ be weak crossed coproducts  with precounits  $\upsilon^{V}$ and $\upsilon^{W}$ respectively.  We will say that $(V\otimes C, \delta_{V\otimes C})$ and $(W\otimes C, \delta_{W\otimes C})$ are equivalent if there exists a comonoid isomorphism $\beta: V\Box C\rightarrow W\Box C$ of right $C$-comodules for the actions $\rho_{V\Box C}$ and $\rho_{W\Box C}$.}
\end{defin}

Then, we are in an ideal position to enunciate (the proof is dual to the one used for Theorem \ref{thm2-equiv}) the result that characterizes equivalent weak crossed coproducts.

\begin{teo}
\label{co-thm2-equiv}
Let $(V\otimes C, \delta_{V\otimes C})$ and $(W\otimes C, \delta_{W\otimes C})$ be weak crossed coproducts  with precounits  $\upsilon^{V}$ and $\upsilon^{W}$ respectively.  The following assertions are equivalent:

\item[(i)] The weak crossed coproducts  $(V\otimes C, \delta_{V\otimes C})$ and $(W\otimes C, \delta_{W\otimes C})$ are equivalent.

\item[(ii)] There exist two morphisms 
$$P:V\ot C\rightarrow W\ot C,\;\; R:W\ot C\rightarrow V\ot C$$
of right $C$-comodules, for the  coactions $\rho_{V\otimes C}$ and $\rho_{W\otimes C}$,  satisfying the conditions 
\begin{equation}
\label{co-preserv-preunit}
\upsilon^{W}\co P=\upsilon^{V},
\end{equation}
\begin{equation}
\label{co-preserv-product}
\delta_{W\otimes C}\co P=(P\ot P)\co \delta_{V\otimes C},
\end{equation}
\begin{equation}
\label{co-preserv-idemp}
R\co P=\Gamma_{V\ot C},\;\; P\co R=\Gamma_{W\ot C}.
\end{equation}

\item[(iii)] There exist two morphisms 
$$\pi:W\ot C\rightarrow V,\;\; \zeta:V\ot C\rightarrow W$$
satisfying the conditions 
\begin{equation}
\label{co-gamma-theta-preunit}
\upsilon^{W}\co (\zeta\ot C)\co (V\ot \delta_{C})=\upsilon^{V},
\end{equation}
\begin{equation}
\label{co-gamma-theta-idemp}
\zeta=\zeta\co \Gamma_{V\ot C},
\end{equation}
\begin{equation}
\label{co-gamma-theta-psi}
\chi_{C}^{W}=(C\ot \zeta)\co (\chi_{C}^{V}\ot C)\co (\pi\ot \delta_{C})\co (W\ot \delta_{C}),
\end{equation}
\begin{equation}
\label{co-gamma-theta-sigma}
\tau_{C}^{W}=(\zeta\ot \zeta)\co \delta_{V\ot C}\co (\pi\ot C)\co (W\ot \delta_{C}) ,
\end{equation}
\begin{equation}
\label{co-gamma-theta-special}
\pi\co (\zeta\ot C)\co (V\ot \delta_{C})=(V\ot \varepsilon_{C})\co \Gamma_{V\ot C}.
\end{equation}
\end{teo}

\begin{nota}
{\rm Note that we can obtain similar results about the equivalence between weak crossed coproducts if we work in the mirror setting, i.e.,  if we use  quadruples ${\Bbb C}_{V}=(V,C,\chi_{V}^{C}, \tau_{V}^{C})$ where $\chi_{V}^{C}:C\ot V\rightarrow V\ot C$ and $\tau_{V}^{C}: C\ot V\rightarrow V\ot V$ satisfy the suitable conditions that define a weak crossed coproduct on $C\ot V$ with preunit $\upsilon_{V}:C\ot V\rightarrow K$.}
\end{nota}

\section{The mixed case: equivalences between weak crossed biproducts}

Following Definition 2.2 of \cite{mra-proj} we introduce the notions of weak crossed biproduct and the one of weak crossed biproduct with preunit and precounit.  Note that the following definition is the dual version of Definition 2.2 of \cite{mra-proj}. 

\begin{defin}
\label{biproduct-defin}
{\rm 
Let $A$ be a monoid and $C$ be a comonoid. We will say that ${\Bbb A}{\Bbb C}=(A\ot C, \mu_{A\ot C},\delta_{A\ot C})$ is a weak crossed biproduct if:
\begin{itemize}
\item[(i)] The pair $(A\ot C, \mu_{A\ot C})$ is a weak crossed product.
\item[(ii)] The pair $(A\ot C, \delta_{A\ot C})$ is a weak crossed coproduct.
\item[(iii)] If $\nabla_{A\ot C}$ is the idempotent associated to $(A\ot C, \mu_{A\ot C})$, and $\Gamma_{A\ot C}$ is the idempotent associated to $(A\ot C, \delta_{A\ot C})$, the identity $\nabla_{A\ot C}=\Gamma_{A\ot C}$ holds.
\end{itemize}

In this case, $\nabla_{A\ot C}$  will be called the idempotent associated to  ${\Bbb A}{\Bbb C}$.

The triple ${\Bbb A}{\Bbb C}$ is a weak crossed biproduct with preunit $\nu_{C}$ and precounit $\upsilon^{A}$, if ${\Bbb A}{\Bbb C}$ is a weak crossed biproduct, $(A\ot C, \mu_{A\ot C})$ is a weak crossed product with preunit $\nu_{C}$, $(A\ot C, \delta_{A\ot C})$  is a weak crossed coproduct with precounit $\upsilon^{A}$, and the following identities are satisfied:
\begin{equation}
\label{3-1}
\eta_{A}=(A\ot \varepsilon_{C})\co \nu_{C},\;\; \varepsilon_{C}=\upsilon^{A}\co (\eta_{A}\ot C).
\end{equation}
}
\end{defin}

In the weak setting, the definition of equivalent weak crossed biproducts with preunit is introduced mixing the corresponding ones for weak crossed products and coproducts.

\begin{defin}
\label{biprod-equiv}
{\rm Let ${\Bbb A}{\Bbb C}_{1}=(A\ot C, \mu_{A\ot C}^{1},\delta_{A\ot C}^{1})$,   ${\Bbb A}{\Bbb C}_{2}=(A\ot C, \mu_{A\ot C}^{2},\delta_{A\ot C}^{2})$ be weak crossed biproducts with preunits $\nu_{C}^{1}$, $\nu_{C}^{2}$, and precounits $\upsilon^{A,1}$, $\upsilon^{A,2}$ respectively. ${\Bbb A}{\Bbb C}_{1}$ and ${\Bbb A}{\Bbb C}_{2}$ will be called equivalent if there exists an isomorphism $\alpha:A\times_{1}C\rightarrow A\times_{2}C$ of monoids, comonoids, left $A$-modules and right $C$-comodules, where $A\times_{1}C$ is the image of the idempotent associated to ${\Bbb A}{\Bbb C}_{1}$,  $A\times_{2}C$ is the image of the idempotent associated to ${\Bbb A}{\Bbb C}_{2}$, and the actions and the coactions are $\varphi_{A\times_{k}C}=p^{k}_{A\ot C}\co \varphi_{A\ot C}\co i^{k}_{A\ot C}$, $\rho_{A\times_{k}C}=(p^{k}_{A\ot C}\ot C)\co \rho_{A\ot C}\co 
\co i^{k}_{A\ot C}$, $k\in \{1,2\}$.

Note that, when we work with Brzezi\'nski's crossed products and coproducts, the previous definition is the one used by Panaite for equivalent product bialgebras in \cite{Pan3}.

}
\end{defin}

Then, using the proofs of the main theorems of the previous sections, we obtain the characterization of weak crossed biproducts with unit and precounit.

\begin{teo}
\label{co-thm2-equiv}
Let ${\Bbb A}{\Bbb C}_{1}$ and ${\Bbb A}{\Bbb C}_{2}$ be weak crossed biproducts  with preunits 
$\nu_{C}^{1}$, $\nu_{C}^{2}$, and precounits $\upsilon^{A,1}$, $\upsilon^{A,2}$ respectively.  The following assertions are equivalent:

\begin{itemize}

\item[(i)] The weak crossed biproducts ${\Bbb A}{\Bbb C}_{1}$ and ${\Bbb A}{\Bbb C}_{2}$ are equivalent.

\item[(ii)] There exist two morphisms 
$$T:A\ot C\rightarrow A\ot C,\;\; S:A\ot C\rightarrow A\ot C$$
of left $A$-modules for the  action $\varphi_{A\ot C}$ and right $C$-comodules for the  coaction $\rho_{A\otimes C}$,  satisfying the conditions 
\begin{equation}
\label{bi-preserv-preunit}
T\co \nu_{C}^1=\nu_{C}^2,
\end{equation}
\begin{equation}
\label{bi-co-preserv-preunit}
\upsilon^{A,2}\co T=\upsilon^{A,1},
\end{equation}
\begin{equation}
\label{bi-preserv-product}
T\co \mu_{A\otimes C}^1=\mu_{A\otimes C}^2\co (T\ot T),
\end{equation}
\begin{equation}
\label{bi-co-preserv-product}
\delta_{A\otimes C}^2\co T=(T\ot T)\co \delta_{A\otimes C}^1,
\end{equation}
\begin{equation}
\label{bi-preserv-idemp}
S\co T=\nabla_{A\ot C}^1,\;\;  T\co S=\nabla_{A\ot C}^2.
\end{equation}

\item[(iii)] There exist four morphisms 
$$\gamma, \theta:C\rightarrow A\ot C,\;\; \pi, \zeta:A\ot C\rightarrow C$$
satisfying the conditions 
\begin{equation}
\label{bi-gamma-theta-preunit}
\nu_{C}^1=(\mu_{A}\ot V)\co (A\ot \theta)\co \nu_{C}^2,
\end{equation}
\begin{equation}
\label{bi-co-gamma-theta-preunit}
\upsilon^{A,2}\co (\zeta\ot C)\co (A\ot \delta_{C})=\upsilon^{A,1},
\end{equation}
\begin{equation}
\label{bigamma-theta-idemp}
\theta=\nabla_{A\ot C}^1\co \theta,  \;\; \zeta=\zeta\co \nabla_{A\ot C}^1,
\end{equation}
\begin{equation}
\label{bi-gamma-theta-psi}
\psi_{C}^{A,2}=(\mu_{A}\ot C)\co (\mu_{A}\ot \gamma)\co (A\ot \psi_{C}^{A,1})\co (\theta\ot A),
\end{equation}
\begin{equation}
\label{bi-gamma-theta-sigma}
\sigma_{C}^{A,2}=(\mu_{A}\ot C)\co (A\ot \gamma)\co \mu_{A\ot C}^1\co (\theta\ot \theta) ,
\end{equation}
\begin{equation}
\label{bi-gamma-theta-special}
(\mu_{A}\ot C)\co (A\ot \theta)\co \gamma=\nabla_{A\ot C}^1\co (\eta_{A}\ot C).
\end{equation}
\begin{equation}
\label{bi-co-gamma-theta-psi}
\chi_{C}^{A,2}=(C\ot \zeta)\co (\chi_{C}^{A,1}\ot C)\co (\pi\ot \delta_{C})\co (A\ot \delta_{C}),
\end{equation}
\begin{equation}
\label{bi-co-gamma-theta-sigma}
\tau_{C}^{A,2}=(\zeta\ot \zeta)\co \delta_{A\ot C}^1\co (\pi\ot C)\co (A\ot \delta_{C}) ,
\end{equation}
\begin{equation}
\label{bi-co-gamma-theta-special}
\pi\co (\zeta\ot C)\co (A\ot \delta_{C})=(A\ot \varepsilon_{C})\co \nabla_{A\ot C}^1.
\end{equation}

\end{itemize}
\end{teo}

\begin{nota}
{\rm Note that, as a particular instance of the previous theorem, we obtain the characterization proposed by Panaite in Theorem 3.6 of \cite{Pan3} for equivalent cross product bialgebras.
}
\end{nota}

\section*{Acknowledgements}
The  first and second authors were supported by  Ministerio de Econom\'{\i}a y Competitividad of Spain (European Feder support included). Grant MTM2013-43687-P: Homolog\'{\i}a, homotop\'{\i}a e invariantes categ\'oricos en grupos y \'algebras no asociativas.  The  first author also was  supported by Consellería de Cultura, Education e Ordenación Universitaria. Grant GRC2013-045.

\end{document}